\crefname{hypothesis}{Hypothesis}{Hypotheses}
\newtheorem{aassumption}{Assumption}
\newtheorem{fact}{Fact}
\title{An inexact $q$-order regularized proximal Newton method for nonconvex composite optimization\thanks{Submitted to the editors DATE.
\funding{This work was funded by the National Natural Science Foundation of China under projects No. 12371299.}}}
\author{Ruyu Liu
	\thanks{School of Mathematics, South China University of Technology (\email{maruyuliu@mail.scut.edu.cn}).}
	\and Shaohua Pan
	\thanks{School of Mathematics, South China University of Technology (\email{shhpan@scut.edu.cn}).}
	\and Yitian Qian
	\thanks{School of Mathematics, South China University of Technology (\email{qianyitian2014@163.com}).}
 }
\begin{document}

\maketitle

\begin{abstract}
This paper concerns the composite problem of minimizing the sum of a twice continuously differentiable function $f$ and a nonsmooth convex function. For this class of nonconvex and nonsmooth problems, by leveraging a practical inexactness criterion and a novel selection strategy for iterates, we propose an inexact $q$-order regularized proximal Newton method for $q\in[2,3]$, which becomes an inexact cubic regularization (CR) method for $q=3$. We prove that the whole iterate sequence converges to a stationary point for the KL objective function; and when the objective function has the KL property of exponent $\theta\in(0,\frac{q-1}{q})$, the convergence has a local $Q$-superlinear rate of order $\frac{q-1}{\theta q}$. In particular, under a local H\"{o}lderian error bound of order $\gamma\in(\frac{1}{q-1},1]$ on a second-order stationary point set, we show that the iterate and objective value sequences converge to a second-order stationary point and a second-order stationary value, respectively, with a local $Q$-superlinear rate of order $\gamma(q\!-\!1)$, specified as the $Q$-quadratic rate for $q=3$ and $\gamma=1$. This is the first practical inexact CR method with $Q$-quadratic convergence rate for nonconvex composite optimization. We validate the efficiency of the CR method with ZeroFPR as the inner solver by applying it to composite optimization problems with highly nonlinear $f$.  
\end{abstract}

\begin{keywords}
 Nonconvex composite optimization; Inexact $q$-order regularized proximal Newton method; Cubic regularization method; KL property; Local H\"{o}lderian error bound
\end{keywords}

\begin{MSCcodes}
 90C26, 49M15, 90C55
\end{MSCcodes}

\section{Introduction}\label{sec1}

 Let $\mathbb{X}$ denote a finite dimensional real vector space endowed with the inner product $\langle\cdot,\cdot\rangle$ and its induced norm $\|\cdot\|$, and write $\overline{\mathbb{R}}\!:=(-\infty,\infty]$. We are interested in the following nonconvex and nonsmooth composite problem
 \begin{equation}\label{prob}
  \min_{x\in\mathbb{X}}F(x):=f(x)+g(x),
 \end{equation}
where $f\!:\mathbb{X}\to\overline{\mathbb{R}}$ and $g\!:\mathbb{X}\to\overline{\mathbb{R}}$ are the functions satisfying the following assumption:
\begin{aassumption}\label{ass1}
\begin{description}
 \item[(i)] $f$ is a proper lower semicontinuous (lsc) function that is twice continuously differentiable on an open set $\mathcal{O}\supset{\rm cl}\,({\rm dom}\,g)$;
		
 \item[(ii)] $g$ is a proper lsc convex function with ${\rm dom}\,g\ne\emptyset$, and $F$ is lower bounded.
\end{description} 
\end{aassumption}
 Model \eqref{prob} allows $g$ to be an indicator of a nonempty closed convex set. Furthermore, it covers the case that $g$ is weakly convex. Indeed, by recalling that $g$ is $\alpha$-weakly convex for some $\alpha>0$ if $g(\cdot)\!+\!({\alpha}/{2})\|\cdot\|^2$ is convex, the function $F$ can be rewritten as $F\!=\overline{f}+\!\overline{g}$ with $\overline{f}(\cdot)\!=f(\cdot)\!-\!(\alpha/2)\|\cdot\|^2$ and $\overline{g}(\cdot)\!=g(\cdot)\!+\!(\alpha/2)\|\cdot\|^2$. Obviously, $\overline{f}$ and $\overline{g}$ satisfy Assumption \ref{ass1}. Such a problem frequently appears in image processing \cite{Bonettini17,Chambolle16}, statistics \cite{HastieTW15}, machine learning \cite{Bottou18}, and financial engineering \cite{Zhou21}, where the smooth function $f$ represents a data fidelity term, and the nonsmooth function $g$ encodes some structure information on the ground truth such as sparsity and nonnegativity. 
\subsection{Related work}\label{sec1.1}

 A variety of methods have been developed for solving  problem \eqref{prob} such as block coordinate descent methods \cite{Tseng09}, variable metric proximal gradient methods \cite{Bonettini20,Bonettini23}, proximal Newton-type methods (see \cite{Kanzow21,LiuPanWY22}), those methods based on the forward-backward envelope of $F$ (see \cite{Themelis18,Ahookhosh21}), and so on. Here, we mainly review cubic regularization (CR) methods that are closely linked to this work. This class of methods is a regularized version of proximal Newton methods, and at each iteration minimizes the sum of a cubic regularized quadratic approximation of $f$ at the current iterate and the nonsmooth convex function $g$:
\begin{equation*}
 \min_{x\in\mathbb{X}}\, \vartheta_k(x)+({L_k}/{3})\|x-x^k\|^3+g(x),
\end{equation*}
where $L_k>0$ is a regularization parameter, and $\vartheta_k$ is the quadratic Taylor approximation of $f$ at the current iterate $x^k$ defined by 
\begin{equation}\label{varthetak}
 \vartheta_k(x):=f(x^k)+\langle\nabla\!f(x^k),x\!-\!x^k\rangle+\frac{1}{2}\langle\nabla^2\!f(x^k)(x\!-\!x^k),x\!-\!x^k\rangle\quad\ \forall x\in\mathbb{X}.
\end{equation}

 Early CR methods are tailored for unconstrained smooth optimization, i.e., problem \eqref{prob} with $g\equiv0$. Griewank \cite{Griewank81} first proposed a CR method for this class of problems and proved that any cluster point of the iterate sequence is a second-order critical point of $f$, i.e., a stationary point $x$ with $\nabla^2\!f(x)\succeq0$. Later, Nesterov and Polyak \cite{Nesterov06} provided a global efficiency estimate for a general class of smooth problems, and for three classes of specific smooth problems, under suitable conditions, they obtained the global complexity results and established the superlinear convergence rate of the objective value sequence. Their work triggers a burst of research on CR methods and tensor methods \cite{Schnabel91}. Cartis et al. \cite{Cartis11a} proposed adaptive CR methods by incorporating an adaptive strategy to tune the regularization parameter into the CR method and allowing the inexact computing of subproblems. They proved in \cite{Cartis11a} the full convergence and local Q-superlinear convergence rate of the iterate sequence under the local strong convexity of $f$, and achieved in \cite{Cartis11b} the global complexity bound to match that of \cite{Nesterov06}. Grapiglia and Nesterov \cite{Grapiglia17} extended the above complexity bound results to unconstrained smooth optimization of a twice differentiable $f$ with H\"{o}lder continuous Hessian. Yue et al. \cite{Yue19cubic} proved the full convergence and quadratic convergence rate of the iterate sequence for the CR method under a local Lipschitzian error bound condition, much weaker than the local strong convexity required in \cite{Nesterov06,Cartis11a} for the local quadratic convergence rate of the iterate sequence.

 Recently, Grapiglia and Nesterov \cite{Grapiglia19} extended the accelerated CR method \cite{Nesterov08} for unconstrained smooth convex optimization to problem \eqref{prob} with a twice differentiable convex $f$ whose Hessian is H\"{o}lder continuous, and achieved the global complexity bound of the objective values. For this class of problem \eqref{prob}, under an additional uniform convexity \footnote{A function $h\!:\mathbb{X}\to\overline{\mathbb{R}}$ is said to be uniformly convex of degree $\alpha\ge2$ over a convex set $C\subset{\rm dom}\,h$ if for some constant $\sigma>0$, $h(y)\ge h(x)+\langle \xi,y-x\rangle+\frac{\sigma}{\alpha}\|y-x\|^{\alpha}$ holds for all $x,y\in C$ and $\xi\in\partial h(x)$.} of $f$ with degree $\nu\!+\!2$, where $\nu$ is the order of H\"{o}lder continuity, Doikov and Nesterov \cite{Doikov21} established the linear convergence rate of objective value sequence of the CR method. Doikov and Nesterov \cite{Doikov20,Doikov22} also investigated (inexact) tensor methods for problem \eqref{prob} with convex $f$ by the $p$th-order derivatives of $f$ with $p\ge 2$, which reduce to an (inexact) CR method for $p=2$. They proposed in \cite{Doikov20} two dynamic strategies for choosing the computation accuracy of subproblems in terms of the number of iterations and the objective value difference of successive iterates, respectively, and for the latter scheme, achieved the global complexity of the objective values, and the linear convergence rate of objective value sequence under the uniform convexity of $F$ with degree $p+1$. When the inner accuracy becomes better, they also established superlinear convergence rate of order $(p\!+\!1)/2$ for the objective value sequence under the strong convexity of $F$. In \cite{Doikov22}, they proposed a regularized composite tensor method, and obtained the local $Q$-superlinear convergence of order $p/(\alpha\!-\!1)$ of the objective value sequence for $p>\alpha\!-\!1$ under the uniform convexity of $F$ with degree $\alpha\ge 2$ and the exact computation of subproblems. It is worth mentioning that Doikov and Nesterov \cite{Doikov22fc} also considered a fully composite convex problem, covering problem \eqref{prob} with convex $f$, and proved the linear convergence rate of objective value sequence under the uniform convexity of all component functions with degree $p+1$. When the uniform convexity is absent, they analyzed the global convergence of objective value sequence by requiring the outer function to be subhomogeneous.

 For nonconvex problem \eqref{prob} where $f$ is $p$-times continuously differentiable with the $p$th-order derivative being H\"{o}lder continuous of degree $\beta_p\in[0,1]$, Cartis et al. \cite{Cartis18} studied the worst-case complexity bound in terms of the first-order optimality conditions, and achieved the complexity bound $O(\epsilon^{-\frac{2+\beta_2}{1+\beta_2}})$ for the CR method. Necoara and Lupu \cite{Necoara21} proposed a general higher-order majorization-minimization algorithm framework for minimizing a nonconvex and nonsmooth function by its $p$ higher-order surrogate at each iterate, and proved the superlinear convergence rate of the objective value sequence under the KL property with exponent $\theta\in(0,\frac{1}{1+p})$. Later, Nabou and Necoara \cite{Nabou22} developed a general composite higher-order algorithmic framework for \eqref{prob} with $f=\varphi\circ G$ and nonconvex $g$ by a $p$ higher-order surrogate of $G$ at each iterate. When higher-order composite surrogates are sufficiently smooth, they derived the $R$-linear convergence rate of the objective value sequence under the KL property of $F$ with exponent $\theta\in(0,\frac{p}{1+p}]$. The CR method for problem \eqref{prob} precisely corresponds to the frameworks in \cite{Necoara21,Nabou22} with $p=2$.  

 From the above discussions, the existing superlinear convergence rate results of (inexact) CR methods are all established for objective value sequences under the uniform convexity of $F$ with degree $\alpha\ge 2$ \cite{Doikov22}, strong convexity of $F$ \cite{Doikov20}, or the KL property of $F$ with exponent $\theta\in(0,{1}/{3})$ \cite{Necoara21}. For nonconvex composite problem \eqref{prob}, there is no work to touch the local superlinear rate or even full convergence of iterate sequences of (inexact) CR methods except for \cite{QianPan22}. In \cite{QianPan22} the authors extended the iterative framework in \cite{Attouch13} for minimizing a nonconvex and nonsmooth function so that the generated sequence possesses a Q-superlinear convergence rate, and showed that the iterate sequence generated by a $q$-order regularized proximal Newton method with $q\in[2,3]$ (reducing to the CR method when $q=3$) for problem \eqref{prob} with a nonconvex $g$ falls within this framework. However, this method is impractical because it requires computing exactly a stationary point of a nonconvex and nonsmooth composite problem at each iteration. Though a practical inexact CR method was developed in \cite{Cartis18} for problem \eqref{prob} with $g$ being the indicator of a closed convex set, only the worst-case evaluation complexity in terms of the first-order optimality conditions was achieved. It is unclear whether its iterate sequence has the full convergence and/or local superlinear convergence rate. Thus, for nonconvex composite problem \eqref{prob}, it is still necessary to design implementable inexact CR methods and study the full convergence and fast local convergence rate of iterate sequences.
 
 Meanwhile, the uniform convexity of degree $\alpha\ge 2$ or strong convexity of objective functions is very restrictive and implies that  the associated optimization problems have a unique minimum. The local strong convexity of $F$ around a critical point is also restrictive, which implies that the critical point is an isolated local minimum of \eqref{prob}. Such a property is absent in many applications of interest; for example, the nonconvex quadratic minimization problem over a box constraint with the objective function of the form $F(x)=\frac{1}{2}x^{\top}Qx+c^{\top}x+\chi_{[l,u]}(x)$, where $Q$ is an $n\times n$ real symmetric matrix $Q$ with $\lambda_{\rm min}(Q)<0$ and $\chi_{[l,u]}(\cdot)$ is the indicator function over the box set $[l,u]\subset\mathbb{R}^n$. This nonconvex and nonsmooth problem does not possess local strong convexity around those non-minimum stationary points. The KL property of $F$ with exponent $\theta\in(0,{1}/{3})$ is also stringent because it is almost impossible to find such $F$ for \eqref{prob} though there are KL examples of exponent $1/2$ for $F$ by \cite{LiPong18,ZhouSo17}.
 Recall that Yue et al. \cite{Yue19cubic} established the local quadratic convergence rate of the iterate sequence of the CR method for unconstrained smooth optimization under a local Lipschitzian error bound. It is natural to ask whether a practical inexact CR method can be designed for \eqref{prob} to have a quadratic convergence rate under a similar local error bound. This work aims to provide an affirmative answer to the question.
\subsection{Main contribution}\label{sec1.2}

 We propose an inexact $q$-order regularized proximal Newton method with $q\in\![2,3]$ for problem \eqref{prob} by utilizing a practical inexactness criterion and a novel selection strategy for the iterates. When $q=3$, it reduces to an inexact CR method. Our contribution involves the following three aspects.

 {\bf(1)} For the proposed algorithm, we establish the full convergence of the iterate sequence under the KL property of $F$ by combining the sufficient decrease of the objective values with the KL inequality on a potential function skillfully. Based on the full convergence analysis and the proof of \cite[Theorem 3.2]{QianPan22}, under the KL property of $F$ with exponent $\theta\in\!(0,\frac{q-1}{q})$, we obtain the superlinear convergence rate of order $\frac{q-1}{\theta q}$ for the iterate and objective value sequences, which are consistent with those of \cite[Theorem 4.1 (ii)]{QianPan22} for the algorithm of only theoretical values. KL functions are ubiquitous in nonconvex and nonsmooth optimization by \cite[Section 4]{Attouch10}, and KL property of exponent $\theta\!\in(\frac{1}{2},\frac{q-1}{q})$ for $q\in\!(2,3]$ is weaker than that of exponent $1/2$. Thus, our convergence results have a wide range of potential applications. Also, the superlinear convergence result of the cost value sequence improves the one got with $p=2$ in \cite{Necoara21} for \eqref{prob}, a special case of the optimization problem considered in \cite{Necoara21}.  
 
 {\bf(2)} Under a local H\"{o}lderian error bound of order $\gamma\in\!(\frac{1}{q-1},1]$ on a second-order stationary point set that reduces to the ordinary one used in \cite{Griewank81,Yue19cubic} if $g\equiv 0$, we prove that the iterate sequence converges to a second-order stationary point with a local $Q$-superlinear convergence rate of order $\gamma(q\!-\!1)$, and the  convergence of the objective value sequence has the same superlinear rate under an additional lower second-order growth on $F$ around the second-order stationary point. This implies that the iterate and objective value sequences of our inexact CR method converge quadratically to a second-order stationary point and a second-order stationary value, respectively, under a local Lipschitzian error bound. To the best of our knowledge, this is the first practical inexact CR method for \eqref{prob} to have a quadratic convergence rate without the local strong convexity of $F$,  improving the superlinear convergence rate of order $4/3$ obtained in \cite{QianPan22} under the KL property of $F$ with  exponent $1/2$, and extending the work \cite{Yue19cubic} to nonconvex and nonsmooth composite optimization. It is worth emphasizing that our proof of the local quadratic convergence rate for $q=3$ is not a direct extension of the proofs in \cite{QianPan22,Yue19cubic}. The local error bound condition is much weaker than the strong convexity required in \cite{Doikov22}, and is basically the same as the one used in \cite{Yue19cubic} when $g\equiv 0$. For the aforementioned quadratic program, though it is not locally strongly convex around any non-minimum stationary points, the local Lipschitzian error bound on the set of stationary points and the KL property of $F$ with exponent $1/2$ are implied by the local upper Lipschitz of $(\partial F)^{-1}$ due to \cite[Proposition 1]{Robinson81}.  We also derive a global complexity bound $O(\epsilon^{-\frac{q}{q-1}})$ in terms of the first-order optimality conditions with the strict continuity of $\nabla^2\!f$ on a compact convex set containing the iterate sequence. The complexity bound for $q=3$ matches that of the CR method \cite{Nesterov06} and its variants \cite{Cartis11b} for unconstrained smooth optimization, and agrees with the one in \cite[Theorem 3.12]{Cartis18} if $g=\chi_{C}$ for a closed convex set $C\subset\mathbb{X}$.
 
 {\bf(3)} We apply the proposed algorithm armed with ZeroFPR \cite{Themelis18} as the solver of subproblems to three classes of problem \eqref{prob}, and compare its performance with that of ZeroFPR directly applied to these problems and IRPNM proposed in \cite{LiuPanWY22}. Numerical results indicate that our algorithm has the performance matching theoretical results. For the $\ell_1$-norm regularized logisitic regression, our CR method returns the optimal values and KKT residuals with better accuracy within $54$ iterations except for data ``arcene''. For the nonconvex $\ell_1$-norm regularized Student's t-regression, it yields second-order stationary points with the same objective values and comparable KKT residuals within less iterations; and for the portfolio decision with higher moments, it generates the comparable objective values and better KKT residuals within comparable running time with IRPNM and much less running time than ZeroFPR.  
\subsection{Notation} 
 Throughout this paper, a capital hollow letter such as $\mathbb{X}$ or $\mathbb{Y}$ denotes a finite dimensional real vector space endowed with the inner product $\langle\cdot,\cdot\rangle$ and its induced norm $\|\cdot\|$.
For a closed set $C\subseteq\mathbb{X}$, $\chi_{C}$ denotes the indicator function of $C$, i.e., $\chi_{C}(x)=0$ if $x\in C$, otherwise $\chi_{C}(x)=\infty$; $\Pi_{C}$ represents the projection operator onto $C$ which may be multi-valued if $C$ is nonconvex; and ${\rm dist}(x,C)$ denotes the distance of a vector $x\in\mathbb{X}$ to the set $C$ with respect to the norm $\|\cdot\|$. For a vector $x\in\mathbb{X}$, $\mathbb{B}(x,\delta)$ denotes the closed ball centered at $x$ with radius $\delta>0$. 
An extended real-valued $h\!:\mathbb{X}\to\overline{\mathbb{R}}$ is proper if its domain ${\rm dom}\,h:=\{x\in\mathbb{X}\ |\ h(x)<\infty\}$ is nonempty. 
For a closed proper function $h\!:\mathbb{X}\to\overline{\mathbb{R}}$, $\partial h(x)$ denote its basic (also known as the Morduhovich) subdifferential at $x$; the notation $[\alpha<h<\beta]$ for real numbers $\alpha<\beta$ represents the set $\{x\in\mathbb{X}\,|\,\alpha<h(x)<\beta\}$; and ${\rm prox}_{\!\alpha h}$ denotes its proximal mapping associated to parameter $\alpha>0$, defined as 
${\rm prox}_{\!\alpha h}(x)\!:=\!\mathop{\arg\min}_{z\in\mathbb{X}}
\big\{\frac{1}{2\alpha}\|z\!-\!x\|^2+h(z)\big\}$. When $h$ is convex, the mapping ${\rm prox}_{\!\alpha h}(x)$ is nonexpansive, i.e., for any $x,y\in\mathbb{X}$, $\|{\rm prox}_{\!\alpha h}(x)-{\rm prox}_{\!\alpha h}(y)\|\le\|x\!-\!y\|$. In the sequel, we just write ${\rm prox}_{h}$ for ${\rm prox}_{1h}$.
\section{Preliminaries}\label{sec2}
We first recall the strict continuity and subdifferential continuity of a proper function $h\!:\mathbb{X}\to\overline{\mathbb{R}}$. 
By \cite[Definition 9.1]{RW98}, $h$ is strictly continuous at $\overline{x}\in{\rm dom}\,h$ if there exist a neighborhood $V$ of $\overline{x}$ and a constant $\kappa\ge 0$ such that for any $x',x''\in V$, 
$|h(x')-h(x'')|\le \kappa\|x'-x''\|$; and it is strictly continuous at $\overline{x}$ relative to ${\rm dom}\,h$ if there exists a neighborhood $V$ of $\overline{x}$ such that $h$ is Lipschitz continuous on ${\rm dom}\,h\cap V$. By \cite[Definition 13.28]{RW98}, the function $h$ is subdifferentially continuous at $\overline{x}$ for $\overline{v}$ if $\overline{v}\in\partial h(\overline{x})$ and, whenever $(x^k,v^k)\xrightarrow{{\rm gph}\,\partial h}(\overline{x},\overline{v})$, one has $h(x^k)\to h(\overline{x})$; and when this holds for all $\overline{v}\in\partial h(\overline{x})$, it is said to be subdifferentially continuous at $\overline{x}$.
\subsection{Stationary points of problem \eqref{prob}}\label{sec2.1}

 A vector $x\in\mathbb{X}$ is called a critical point of $F$ or a stationary point of \eqref{prob} if $0\in\partial F(x)$, and denote by $\mathcal{S}^*$ the set of stationary points of \eqref{prob}. Define the residual function of problem \eqref{prob} by 
 \begin{equation}\label{Rmap}
  r(x):=\|R(x)\|\ \ {\rm with} \ \ 
  R(x)\!:=x\!-\!{\rm prox}_g(x\!-\!\nabla\!f(x))\quad{\rm for}\ x\in\mathbb{X}.
 \end{equation}
 By the continuous differentiability of $f$ on $\mathcal{O}\supset{\rm cl}\,({\rm dom}\,g)$ and the convexity of $g$, it is easy to verify that $\mathcal{S}^*=\{x\in\mathbb{X}\,|\,r(x)=0\}$, and furthermore, $\mathcal{S}^*$ coincides with the set of $L$-stationary points as well as that of $D$-stationary points \footnote{We call $x\in{\rm dom}\,g$ an $L$-stationary point if there is $L>0$ such that $x={\rm prox}_{L^{-1}g}(x-L^{-1}\nabla\!f(x))$, and a $D$-stationary point if $\langle \nabla\!f(x),d\rangle+g'(x;d)\ge 0$ for all $d\in\mathbb{X}$.}. A stationary point $x\in\mathcal{S}^*$ is called a second-order stationary point of \eqref{prob} if for any $z\in\mathbb{X}$, 
 \begin{equation}\label{SOSS}
  \langle {\rm prox}_{g}(z)-x,\nabla^2\!f(x)({\rm prox}_{g}(z)-x)\rangle\ge 0,
 \end{equation}
 and denote by $\mathcal{X}^*$ the set of second-order stationary points. When $g\equiv 0$, this reduces to the one used in \cite{Yue19cubic}. When $g\not\equiv 0$ and $f$ is convex, $\mathcal{X}^*=\{x\in\mathcal{S}^*\,|\,\nabla^2\!f(x)\succeq 0\}=\mathcal{S}^*$. When $g\not\equiv 0$ and $f$ is nonconvex, the inclusion $\mathcal{X}^*\supset\{x\in\mathcal{S}^*\,|\,\nabla^2\!f(x)\succeq 0\}$ is generally strict because for a given $x\in\mathcal{S}^*$ the set ${\rm prox}_{g}(\mathbb{X})-x$ is much smaller than $\mathbb{X}$; for example, if $g=\chi_{C}$ for a closed convex set $C\subset\mathbb{X}$, ${\rm prox}_{g}(\mathbb{X})-x\subset C-x$. 
\subsection{Metric subregularity of residual mapping}\label{sec2.2}

 The metric $\gamma$-subregularity of a multifunction was used to analyze the convergence rate of proximal point algorithm for seeking a root to a maximal monotone operator \cite{LiMor12}, and the local superlinear convergence rates of proximal Newton-type methods for composite optimization problems \cite{Mordu23,LiuPanWY22}. Its formal definition is stated as follows.
\begin{definition}\label{Def2.1}(see \cite[Definition 3.1]{LiMor12})
 Let $\mathcal{F}\!:\mathbb{X}\rightrightarrows\mathbb{X}$ be a multifunction. Consider any point $(\overline{x},\overline{y})\in{\rm gph}\,\mathcal{F}$, the graph of $\mathcal{F}$. For a given $\gamma>0$, we say that $\mathcal{F}$ is (metrically) $\gamma$-subregular at $(\overline{x},\overline{y})$ if there exist $\kappa>0$ and $\delta>0$ such that 
 \begin{equation}\label{q-subregular}
 {\rm dist}(x,\mathcal{F}^{-1}(\overline{y}))\le\kappa[{\rm dist}(\overline{y},\mathcal{F}(x))]^{\gamma}\quad{\rm for\ all}\ x\in\mathbb{B}(\overline{x},\delta).
 \end{equation}
 When $\gamma=1$, this property is called the (metric) subregularity of $\mathcal{F}$ at $(\overline{x},\overline{y})$. 
\end{definition}

 Obviously, if $\mathcal{F}$ is subregular at $(\overline{x},\overline{y})\in{\rm gph}\,\mathcal{F}$, it is $\gamma$-subregular at this point for any $\gamma\in(0,1]$. For the residual mapping $R$ in \eqref{Rmap}, its $\gamma$-subregularity at $(\overline{x},0)$ with $\overline{x}\in\mathcal{S}^*$ for $\gamma\in(0,1]$ requires the existence of $\kappa>0$ and $\delta>0$ such that 
\begin{equation}\label{ebound-MS}
 {\rm dist}(x,\mathcal{S}^*)\le\kappa[r(x)]^{\gamma}\quad{\rm for\ all}\ x\in\mathbb{B}(\overline{x},\delta). 
\end{equation}
 This is the local H\"olderian error bound of order $\gamma$ on $\mathcal{S}^*$ at $\overline{x}$, and is a little weaker than the one on $\mathcal{X}^*$ at $\overline{x}\in\mathcal{X}^*$. Some checkable conditions for the subregularity of $R$ are derived by using the results of \cite{Bai19}; see \url{https://arxiv.org/abs/2311.06871v3}. 
\subsection{Kurdyka-{\L}ojasiewicz property}\label{sec2.3}

To recall the KL property, for every $\varpi>0$, denote $\Upsilon_{\!\varpi}$ by the set of continuous concave functions $\varphi\!:[0,\varpi)\to\mathbb{R}_{+}$ that are continuously differentiable on $(0,\varpi)$ with $\varphi(0)=0$ and $\varphi'(s)>0$ for all $s\in(0,\varpi)$.
\begin{definition}\label{KL-def}
 A proper function $h\!:\mathbb{X}\to\overline{\mathbb{R}}$ is said to have the KL property at a point $\overline{x}\in{\rm dom}\,\partial h$ if there exist $\delta>0,\varpi\in(0,\infty]$ and $\varphi\in\Upsilon_{\!\varpi}$ such that 
 \[
   \varphi'(h(x)\!-\!h(\overline{x})){\rm dist}(0,\partial h(x))\ge 1
 \]
 for all $x\in\mathbb{B}(\overline{x},\delta)\cap[h(\overline{x})<h<h(\overline{x})+\varpi]$; and it is said to have the KL property of exponent $\theta\in[0,1)$ at $\overline{x}$ if there exist $c>0,\,\delta>0$ and $\varpi\in(0,\infty]$ such that  
 \[
   c(1\!-\!\theta)\,{\rm dist}(0,\partial h(x))\ge (h(x)\!-\!h(\overline{x}))^{\theta}
 \]
 for all $x\in\mathbb{B}(\overline{x},\delta)\cap[h(\overline{x})<h<h(\overline{x})+\varpi]$. 
 If $h$ has the KL property (of exponent $\theta$) at each point of ${\rm dom}\,\partial h$, it is called a KL function (of exponent $\theta$).
\end{definition}

 By \cite[Lemma 2.1]{Attouch10}, to show that a proper lsc function has the KL property (of exponent $\theta\in[0,1)$), it suffices to check if the property holds at its critical points. The discussions in \cite[Section 2.2]{LiuPanWY22} show that the KL property of $h$ with exponent $\theta\in(0,1)$ has a close relation with the $\gamma$-subregularity of its subdifferential mapping $\partial h$ for $\gamma\in(0,1]$. As discussed in \cite[Section 4]{Attouch10}, the KL property is ubiquitous and the functions definable in an o-minimal structure over the real field admit this property. 
 \section{Inexact $q$-order regularized proximal Newton method}\label{sec3}
 
Let $x^k$ be the current iterate. When $\nabla^2\!f$ is H\"older calm with exponent $q\!-\!2$ at $x^k$, i.e., there exist $\overline{L}_k>0$ and $\delta_k>0$ such that $\|\nabla^2\!f(z)\!-\!\nabla^2\!f(x^k)\|\le \overline{L}_k\|z-x^k\|^{q-2}$ for all $z\in\mathbb{B}(x^k,\delta_k)$, it is easy to verify that $|f(z)\!-\!\vartheta_k(z)|\le({\overline{L}_k}/{q})\|z\!-\!x^k\|^q$ for all $z\in\mathbb{B}(x^k,\delta_k)$. This means that the function $z\mapsto \vartheta_k(z)+(L_k/q)\|z-x^k\|^q+g(z)$ with $L_k\ge\overline{L}_k$ is a local majorization of $F$ at $x^k$. Based on this, our method first constructs a local majorization of $F$ at the current iterate $x^k$ by adding the $q$-order regularization term $({L_{k}}/{q})\|\cdot-x^k\|^{q}$ to the local quadratic approximation, and then computes inexactly 
\begin{equation}\label{subprobk}
 \min_{x\in\mathbb{X}}\Theta_k(x):=f_k(x)+g(x)\ \ {\rm with}\ f_k(x)\!:=\vartheta_k(x)+({L_k}/q)\|x\!-\!x^k\|^q
\end{equation}
to seek an inexact stationary point $y^k$ satisfying the following conditions
\begin{subnumcases}{}\label{inexactk}
 r_k(y^k)\le\varrho L_k\|y^k\!-\!x^k\|^{q-1}\ {\rm for}\ \varrho\in(0,1)\ {\rm with}\ \Theta_k(y^k)\le\Theta_k(x^k),\\
	\label{descentk}
F(y^k)\le F(x^k)-\sigma(L_k/q)\|y^k\!-\!x^k\|^{q}\ \ {\rm for}\ \sigma\in(0,1),
\end{subnumcases}
where $r_k\!:\mathbb{X}\to\mathbb{R}$ is the KKT residual function of subproblem \eqref{subprobk} defined by
\begin{equation}\label{Rk-def}
 r_{k}(x)\!:=\|R_{k}(x)\|\ \ {\rm with}\ \ R_{k}(x)\!:=x-{\rm prox}_g\big(x-\nabla\!f_{k}(x)\big)\ {\rm for}\ x\in\mathbb{X}.
\end{equation}
The $q$-order regularization term $({L_k}/{q})\|\cdot-x^k\|^q$ accounts for $f_k$ being a better approximation than $\vartheta_k$ to $f$ at $x^k$. Although problem \eqref{subprobk} is still a nonconvex composite optimization one, it is much simpler than the original problem \eqref{prob} because its nonconvexity is only caused by the quadratic term. The above condition \eqref{inexactk} aims at seeking an approximate stationary point $y^k$ of subproblem \eqref{subprobk} with its objective value no worse than the current one $\Theta_k(x^k)$, while condition \eqref{descentk} requires the approximate stationary point $y^k$ to have an objective value $F(y^k)$ lower than the current one of problem \eqref{prob}. The computation of subproblem \eqref{subprobk} with an appropriate $L_k$ is not as difficult as it seems to be. Indeed, at the $k$th iteration, by the expression of $f_k$, we have $\nabla^2\!f_k(y)\succeq \nabla^2\!f(x^k)+L_k\|y-x^k\|^{q-2}\mathcal{I}$ for $y\in\mathbb{X}$. Hence, as long as $x^k$ is not a stationary point of \eqref{subprobk}, for an appropriately large $L_k$, it is possible for $f_k$ and then $\Theta_k$ to be strongly convex in a neighborhood of the stationary point set of \eqref{subprobk}. In addition, for an appropriately large $L_k$, the function $\Theta_k$ is coercive, so subproblem \eqref{subprobk} has an optimal solution, which by Lemma \ref{well-def} later ensures the existence of the desired $y^k$. Motivated by this, our method at each iteration employs a line-search procedure to capture an appropriate $L_k$ and a desired $y^k$ simultaneously.  

 With $y^k$, the next iterate $x^{k+1}$ is set to be either $y^k$ or $y^k\!-\!R_k(y^k)$ depending on which one has a lower objective value. Note that $y^k\!-\!R_k(y^k)={\rm prox}_g(y^k\!-\!\nabla\!f_k(y^k))$ is a potential stationary point of \eqref{prob} because it can be viewed as an approximate iterate produced by applying the iterative formula $x^{k+1}={\rm prox}_g(x^k\!-\!\nabla\!f(x^k))$. Hence, such a selection strategy takes into account the objective values of potential stationary points. As a consequence, it is possible to gain a sufficient decrease of the objective values as well as guarantee the convergence of the iterate sequence. A similar selection strategy was adopted in the variable metric inexact proximal gradient method \cite{Bonettini17,Bonettini20} and the inexact proximal Newton method \cite{LiuPanWY22} for nonconvex composite optimization, but it is done between $y^k$ and a point on the line segment joining $x^k$ and $y^k$, and does not involve the information on the objective values of potential stationary points.   

 The iteration process of our inexact $q$-order regularized proximal Newton method is described as follows, where for each $k\in\mathbb{N}$ and $j\in\mathbb{N}$, $r_{k,j}$ denotes the KKT residual of subproblem \eqref{subprobj} defined as in \eqref{Rk-def} except that $f_k$ is replaced with $f_{k,j}$, and $f_{k,j}$ is the function defined by $f_{k,j}(x):=\vartheta_{k}(x)+(L_{k,j}/q)\|x-x^k\|^q$ for $x\in\mathbb{X}$.
\begin{algorithm}[H]
 \caption{\label{qregPNT}{\bf (Inexact $q$-order regularized proximal Newton method)}}
 \textbf{Input:} $q\in[2,3],0<\!L_{m}<L_{\!M},\varrho\in(0,1),\sigma\in(0,1),\tau>1$ and $x^0\in{\rm dom}\,g$.
	
 \noindent
 \textbf{For} $k=0,1,2,\ldots$ \textbf{do}
 \begin{enumerate}		
 \item Choose $L_{k,0}\in[L_{m},L_{\!M}]$ and $L_{k,0}>\max(0,-\lambda_{\min}(\nabla^2f(x^k)))$ if $q=2$.
		
 \item \textbf{For} $j=0,1,2,\ldots$ \textbf{do}
		\begin{itemize}
			\item[(2a)] Compute an inexact stationary point $y^{k,j}$ of the composite problem 
			\begin{equation}\label{subprobj}
				\min_{x\in\mathbb{X}}\ \Theta_{k,j}(x)\!:=f_{k,j}(x)+g(x)
			\end{equation}
			such that $y^{k,j}$ satisfies the following inexactness criterion
	\begin{equation}\label{inexact-cond}
	   r_{k,j}(y^{k,j})\le \varrho L_{k,j}\|y^{k,j}\!-x^k\|^{q-1}\ \ {\rm with}\ \ \Theta_{k,j}(y^{k,j})\le\Theta_{k,j}(x^k).
	\end{equation}
			
	     \item[(2b)] If $ F(y^{k,j})\le F(x^k)-(\sigma L_{k,j}/q)\|y^{k,j}-x^k\|^q$, then set $j_k=j$ and go to step 3; else let $L_{k,j+1}=\tau L_{k,j}$ and go to step (2a).
	   \end{itemize}
	\textbf{end (For)}
		
 \item Write $L_k\!:=L_{k,j_k},\,y^k\!:=y^{k,j_k}$ and $v^k\!:=R_{k,j_k}(y^k)$. Let 
		\begin{equation}\label{new-iter}
			x^{k+1}\!:=\left\{\begin{array}{cl}
				y^k &{\rm if}\ F(y^k)<F(y^k\!-\!v^k),\\
				y^k\!-\!v^k &{\rm  otherwise}.
			\end{array}\right.
		\end{equation}
         and go to step 1.

\end{enumerate}
\textbf{end (For)}
\end{algorithm} 
\begin{remark}\label{remark-alg}
 {\bf (a)} Recall that $r_{k,j}(y^{k,j})=0$ if and only if $y^{k,j}$ is a stationary point of \eqref{subprobj}. The inexactness criterion \eqref{inexact-cond} along with $\Theta_{k,j}(x^k)=F(x^k)$ means that step (2a) of Algorithm \ref{qregPNT} is searching for an inexact stationary point of \eqref{subprobj} with its objective value no worse than the current one of \eqref{prob}. Obviously, any efficient algorithms developed recently for nonconvex composite optimization such as ZeroFPR \cite{Themelis18} can be used as an inner solver.  Together with the checkability of the inexactness criterion \eqref{inexact-cond}, Algorithm \ref{qregPNT} is a practical $q$-order regularized proximal Newton method. This is the most significant difference from the one proposed in \cite{QianPan22}, which requires computing an exact stationary point of subproblem at each iteration. In addition, Algorithm \ref{qregPNT} involves a novel selection strategy for the next iterate $x^{k+1}$ which, as will be shown in Section \ref{sec4}, ensures that the iterate sequence generated by Algorithm \ref{qregPNT} is convergent. When $q=2$, Algorithm \ref{qregPNT} reduces to a regularized proximal Newton method for solving problem \eqref{prob}, but unlike the one developed in \cite{LiuPanWY22} its subproblems are not necessarily convex, and the adopted inexactness criterion and selection strategy on the next iterate are also different from those used in \cite{LiuPanWY22}.

 \noindent
 {\bf (b)} Algorithm \ref{inexact-cond} is well-defined by Lemma \ref{well-def} later. When $x^{k+1}\!=x^k$ occurs, $x^k$ is a stationary point of \eqref{prob} because now $x^k=y^k$ holds and the inexactness criterion \eqref{inexact-cond} implies that $r(x^k)=r_{k}(y^k)=0$. Indeed, from \eqref{new-iter}, either $x^{k+1}=y^k$ or $x^{k+1}=y^k-v^k$. For the former, $x^k=y^k$ automatically holds; while for the latter, $x^k=y^k-v^k$, which by step (2b) implies that 
 $\sigma(L_k/q)\|y^k\!-\!x^k\|^q\le F(x^k)\!-\!F(y^k)\le F(x^k)\!-\!F(y^k\!-\!v^k)=0$, so $x^k=y^k$ holds by $L_k\ge L_{m}>0$. In view of this, we suggest that $r(x^{k+1})\le\epsilon$ or $\|x^{k+1}\!-\!x^k\|\le\epsilon$ for an accuracy tolerance $\epsilon\in(0,1)$ as the stopping criterion of Algorithm \ref{qregPNT}. 
 
 \noindent
 {\bf (c)} For the smooth unconstrained problem $\min_{x\in\mathbb{R}^n}\!f(x)$, we claim that as $k$ is large enough, our inexactness condition \eqref{inexactk} with $g\equiv 0$ and $q=3$ is weaker than 
 \begin{equation}\label{Cartis-cond1}
  f_k(y^k)\le f_k(y^k_C)\ {\rm and}\  
  \|\nabla\!f_k(y^k)\|\le\kappa_{\theta}\min(1,\|y^k\!-\!x^k\|)\|\nabla\!f(x^k)\|\ {\rm for}\ \kappa_{\theta}\!\in(0,1),
 \end{equation}
 the one adopted by Cartis et al. \cite{Cartis11a,Cartis11b} for the adaptive cubic regularization methods with $m_k\equiv f_k$, where $y^k_C=x^k\!-\!\alpha_k^C\nabla\!f(x^k)$ with  $\alpha_k^C=\mathop{\arg\min}_{\alpha\ge0}m_k(x^k\!-\!\alpha\nabla\!f(x^k))$. Indeed, from the first inequality of \eqref{Cartis-cond1} and the definition of $\alpha_{k}^{C}$, for each $k\in\mathbb{N}$,
 \[
   f_k(y^k)\le f_k(y_{C}^k)\le f_k(x^k),
 \]
 which along with $g\equiv 0$ implies that $\Theta_k(y^k)\le\Theta_k(x^k)$. While from the second inequality of \eqref{Cartis-cond1}, we have $\|\nabla f_k(y^k)\|\le\kappa_{\theta}\|\nabla\!f(x^k)\|$, which in turn implies that  
 \[
  (1\!-\!\kappa_{\theta})\|\nabla\!f(x^k)\|\le\|\nabla\!f(x^k)\!-\!\nabla f_k(y^k)\|\le\|\nabla^2\!f(x^k)\|\|y^k\!-\!x^k\|+L_k\|y^k\!-\!x^k\|^2. 
\]
Along with the boundedness of $\{\nabla^2\!f(x^k)\}_{k\in\mathbb{N}}$ and $\{L_k\}_{k\in\mathbb{N}}$ and $\lim_{k\to\infty}\|y^k\!-\! x^k\|=0$ (see Proposition \ref{prop2-xk} later),  
it follows that $\|\nabla\!f(x^k)\|\le \widetilde{c}\|y^k\!-\!x^k\|$ with some $\widetilde{c}>0$ for all $k$ large enough, which by the second inequality of \eqref{Cartis-cond1} means that $\|\nabla\! f_k(y^k)\|\le\kappa_{\theta}\widetilde{c}\|y^k\!-\!x^k\|^2$, i.e., the first inequality of \eqref{inexactk} holds. Thus, the claimed relation holds. By following the same arguments, we conclude that as $k$ is sufficiently large, our inexactness condition \eqref{inexactk} with $g\equiv 0$ and $q=5/2$ is also weaker than 
\begin{equation*}
  f_k(y^k)\le f_k(y^k_C)\ \ {\rm and}\ \  
 \|\nabla f_k(y^k)\|\le\kappa_{\theta}\min(1,\|\nabla\!f(x^k)\|^{1/2})\|\nabla\!f(x^k)\|,
 \end{equation*}
 which is another one used by Cartis et al. \cite{Cartis11a,Cartis11b} for the proposed methods.
\end{remark}

 Note that $v^k=R_{k,j_k}(y^k)$ for each $k\in\mathbb{N}$. By the expression of $R_{k,j_k}$, we have 
 \begin{equation}\label{yk-vk}
  y^k\!-\!v^k={\rm prox}_{g}(y^k\!-\!\nabla\!f_{k,j_k}(y^k))\ \ {\rm and}\  \ y^k\!-\!\nabla\!f_{k,j_k}(y^k)\in\partial g(y^k\!-\!v^k)\quad\forall k\in\mathbb{N}.
 \end{equation}
 These two relations will be frequently used in the convergence analysis in Section \ref{sec4}. 

Before analyzing the properties of the iterate sequence yielded by Algorithm \ref{qregPNT}, we need to show that it is well-defined. This is implied by the following lemma.
\begin{lemma}\label{well-def}
 Fix any $k\in\mathbb{N}$ with $r(x^k)\ne 0$. The following assertions hold.
 \begin{itemize}
  \item[(i)] For each $j\in\mathbb{N}$, subproblem \eqref{subprobj} has a stationary point $\overline{y}^{k,j}$ satisfying $\Theta_{k,j}(\overline{y}^{k,j})<\Theta_{k,j}(x^k)$.
		
  \item[(ii)] For each $j\in\mathbb{N}$, condition \eqref{inexact-cond} is satisfied by any $y-R_{k,j}(y)$ with $y$ close enough to a stationary point $\overline{y}^{k,j}$ of \eqref{subprobj} with $\Theta_{k,j}(\overline{y}^{k,j})<\Theta_{k,j}(x^k)$. 
		
 \item[(iii)] The inner for-end loop stops within a finite number of steps.
 \end{itemize}
\end{lemma}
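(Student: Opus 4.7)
My plan is to treat the three assertions in order, leveraging throughout that $L_{k,j}\!\to\!\infty$ as $j\!\to\!\infty$ and the identity $\nabla\!f_{k,j}(x^k)=\nabla\!f(x^k)$, which gives $\partial\Theta_{k,j}(x^k)=\partial F(x^k)$ for every $j$; hence the standing assumption $r(x^k)\ne 0$ means that $x^k$ is \emph{never} a stationary point of \eqref{subprobj}. For \textbf{part (i)}, once $j$ is large enough to make $\Theta_{k,j}$ coercive---automatic for $q\in(2,3]$ and ensured once $L_{k,j}I+\nabla^2\!f(x^k)\succ 0$ when $q=2$---the subproblem attains its infimum at some $\overline{y}^{k,j}\in{\rm dom}\,g$, necessarily a stationary point and necessarily different from $x^k$, so that $\Theta_{k,j}(\overline{y}^{k,j})<\Theta_{k,j}(x^k)$. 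For \textbf{part (ii)}, set $z:=y-R_{k,j}(y)={\rm prox}_g(y-\nabla\!f_{k,j}(y))$. The stationarity of $\overline{y}^{k,j}$ yields the fixed-point relation $\overline{y}^{k,j}={\rm prox}_g(\overline{y}^{k,j}-\nabla\!f_{k,j}(\overline{y}^{k,j}))$, and continuity of ${\rm prox}_g$ then gives $z\to\overline{y}^{k,j}$ and $r_{k,j}(z)\to 0$ as $y\to\overline{y}^{k,j}$; since $\overline{y}^{k,j}\ne x^k$, $\|z-x^k\|^{q-1}$ stays bounded below by a positive constant in a neighborhood, which delivers the first inequality of \eqref{inexact-cond}, while the continuity of the real-valued map $y\mapsto\Theta_{k,j}({\rm prox}_g(y-\nabla\!f_{k,j}(y)))$ together with the strict inequality from (i) gives the second.

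The real work is \textbf{part (iii)}: I would verify the descent condition \eqref{descentk} at the exact minimizer $\overline{y}^{k,j}$ for all sufficiently large $j$ and then transfer it to a nearby inexact $y^{k,j}$ via part (ii). From $\Theta_{k,j}(\overline{y}^{k,j})\le F(x^k)$ combined with coercivity uniform in $j\ge\overline{j}$, the sequence $\{\overline{y}^{k,j}\}$ is bounded; and since $(L_{k,j}/q)\|\overline{y}^{k,j}-x^k\|^q$ cannot blow up while $\vartheta_k+g$ stays bounded on a bounded set, one deduces $\overline{y}^{k,j}\to x^k$. To quantify the rate, I would compare $R(\overline{y}^{k,j})$ with $R_{k,j}(\overline{y}^{k,j})=0$: nonexpansiveness of ${\rm prox}_g$ and a $C^2$-Taylor expansion of $\nabla\!f$ give
$\|R(\overline{y}^{k,j})\|\le\|\nabla\!f_{k,j}(\overline{y}^{k,j})-\nabla\!f(\overline{y}^{k,j})\|\le L_{k,j}\|\overline{y}^{k,j}\!-x^k\|^{q-1}+o(\|\overline{y}^{k,j}\!-x^k\|)$,
and since $R(\overline{y}^{k,j})\to R(x^k)\ne 0$, this forces $L_{k,j}\|\overline{y}^{k,j}-x^k\|^{q-1}\ge\|R(x^k)\|/2$ for $j$ large, i.e.\ $L_{k,j}\|\overline{y}^{k,j}-x^k\|^{q-2}\gtrsim L_{k,j}^{1/(q-1)}\to\infty$.

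With this lower bound in hand, the continuity-based Taylor estimate $f(\overline{y}^{k,j})-\vartheta_k(\overline{y}^{k,j})\le\tfrac{1}{2}\epsilon_j\|\overline{y}^{k,j}-x^k\|^2$ with $\epsilon_j\!\downarrow\!0$ yields $f(\overline{y}^{k,j})-\vartheta_k(\overline{y}^{k,j})\le\tfrac{1-\sigma}{q}L_{k,j}\|\overline{y}^{k,j}-x^k\|^q$ for all $j$ large; combined with $\Theta_{k,j}(\overline{y}^{k,j})\le F(x^k)$, this rearranges into the strict descent condition at $\overline{y}^{k,j}$. Continuity of $F\circ{\rm prox}_g$ extends the strict inequality to $y^{k,j}=y-R_{k,j}(y)$ for $y$ sufficiently close to $\overline{y}^{k,j}$, showing that steps (2a)--(2b) succeed at that $j$ and thus establishing termination of the inner loop. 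The chief obstacle is this rate step: because only $C^2$ regularity of $f$ is assumed, no a priori H\"olderian calmness of $\nabla^2\!f$ is available, so the lower bound $\|\overline{y}^{k,j}-x^k\|\gtrsim L_{k,j}^{-1/(q-1)}$ must be extracted indirectly from the residual-map comparison $R$ vs.\ $R_{k,j}$ rather than from a direct majorization of $f$ by $f_{k,j}$.
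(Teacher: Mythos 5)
Your parts (i) and (ii) follow essentially the paper's route: coercivity of $\Theta_{k,j}$ for large $j$ (from $L_{k,j}\to\infty$ and an affine minorant of the convex $g$) yields a global minimizer $\overline{y}^{k,j}$, hence a stationary point, and the strict inequality $\Theta_{k,j}(\overline{y}^{k,j})<\Theta_{k,j}(x^k)$ holds because equality would make $x^k$ itself a minimizer and hence stationary, contradicting $r(x^k)\ne 0$ (note that ``$\overline{y}^{k,j}\ne x^k$'' alone does not give strictness; you need this optimality argument, which you have implicitly). In (ii), the continuity of $y\mapsto g\big({\rm prox}_g(y-\nabla\!f_{k,j}(y))\big)$ that you invoke is true but is the one point that deserves justification, since $g$ is only lsc: the paper derives it from the subdifferential continuity of convex functions, and it also follows from the $C^1$ smoothness of the Moreau envelope of $g$.

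The genuine gap is in part (iii). Termination of the inner loop requires that, for all sufficiently large $j$, \emph{every} point $y^{k,j}$ accepted in step (2a) --- that is, every point satisfying \eqref{inexact-cond} --- passes the descent test in step (2b). Your argument establishes descent only at the exact minimizer $\overline{y}^{k,j}$ and at points $y-R_{k,j}(y)$ with $y$ close to it. But step (2a) imposes no requirement that the inner solver return a point near $\overline{y}^{k,j}$ (or near any stationary point of the nonconvex subproblem): the first inequality of \eqref{inexact-cond} has the possibly large right-hand side $\varrho L_{k,j}\|y^{k,j}\!-x^k\|^{q-1}$, so admissible points may carry a sizable subproblem residual and need not lie in the neighborhoods covered by part (ii). Showing that \emph{some} admissible points also satisfy \eqref{descentk} therefore does not preclude the loop from running forever on a sequence of admissible-but-non-descending returns. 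The paper closes this by arguing by contradiction directly on the accepted iterates: if the descent test fails for all $j$, the second inequality of \eqref{inexact-cond} together with $L_{k,j}\to\infty$ forces $y^{k,j}\to x^k$; the first inequality together with $r(x^k)\ne 0$ forces $\liminf_{j}L_{k,j}\|y^{k,j}\!-x^k\|^{q-1}>0$; and then the Taylor estimate $f(y^{k,j})-\vartheta_k(y^{k,j})=o(\|y^{k,j}\!-x^k\|^2)\le\frac{1-\sigma}{q}L_{k,j}\|y^{k,j}\!-x^k\|^q$ combined with $\Theta_{k,j}(y^{k,j})<F(x^k)$ yields descent, a contradiction. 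Your two key estimates --- the residual-map comparison giving the lower bound on $L_{k,j}\|\cdot\|^{q-1}$, and the $C^2$ Taylor bound --- are exactly the right tools; they just have to be run on the accepted $y^{k,j}$ rather than on $\overline{y}^{k,j}$, with the exact identity $R_{k,j}(\overline{y}^{k,j})=0$ replaced by the inexact bound $r_{k,j}(y^{k,j})\le\varrho L_{k,j}\|y^{k,j}\!-x^k\|^{q-1}$.
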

\begin{proof}
 {\bf (i)} As $g$ is convex and its domain is nonempty, we have ${\rm ri}({\rm dom}\,g)\ne\emptyset$. Pick any $\widehat{x}\in{\rm ri}({\rm dom}\,g)$. Since $\partial g(\widehat{x})\ne\emptyset$, we choose any $\widehat{\xi}\in\partial g(\widehat{x})$. Then, 
 \begin{equation}\label{g-ineq1}
  g(x)\ge g(\widehat{x})+\langle\widehat{\xi},x-\widehat{x}\rangle\quad\ \forall x\in{\rm dom}\,g,
 \end{equation}
 which means that $f_{k,j}(z)+g(z)\ge\vartheta_{k}(z)+\langle\widehat{\xi},z\rangle+(L_{k,j}/q)\|z-x^k\|^q+g(\widehat{x})-\langle\widehat{\xi},\widehat{x}\rangle$ for any $z\in\mathbb{X}$.  Note that $L_{k,0}>\max(0,-\lambda_{\min}(\nabla^2f(x^k)))$ when $q=2$. Together with $q\in[2,3]$ and the expression of $\vartheta_k$, it follows that for each $j\in\mathbb{N}$, as $\|z\|\to\infty$,
  \[
   \vartheta_{k}(z)+\langle\widehat{\xi},z\rangle+(L_{k,j}/q)\|z-x^k\|^q+g(\widehat{x})-\langle\widehat{\xi},\widehat{x}\rangle\to\infty.
 \]
 That is, for each $j\in\mathbb{N}$, subproblem \eqref{subprobj} is coercive, so has an optimal solution.
 Fix any $j\in\mathbb{N}$. Let $\overline{y}^{k,j}$ be an optimal solution of \eqref{subprobj}. Clearly, $\overline{y}^{k,j}$ is a stationary point of subproblem \eqref{subprobj} and $\Theta_{k,j}(\overline{y}^{k,j})\le\Theta_{k,j}(x^k)$. The rest only argues that $\Theta_{k,j}(\overline{y}^{k,j})\ne\Theta_{k,j}(x^k)$. If not, then $x^k$ is an optimal solution of subproblem \eqref{subprobj}, and $0\in\nabla\!f_{k,j}(x^k)+\partial g(x^k)=\nabla\!f(x^k)+\partial g(x^k)$, a contradiction to $r(x^k)\ne 0$. 
	
 \noindent
 {\bf(ii)} Fix any $j\in\mathbb{N}$.  Let $\overline{y}^{k,j}$ be a stationary point of \eqref{subprobj} with $\Theta_k(\overline{y}^{k,j})<\Theta_k(x^k)$. Clearly, $\overline{y}^{k,j}\neq x^k$. From $r_{k,j}(\overline{y}^{k,j})=0$, we have
 $r_{k,j}(\overline{y}^{k,j})-\varrho L_{k,j}\|\overline{y}^{k,j}\!-x^k\|^{q-1}<0$.
 Note that $r_{k,j}(\cdot)$ is continuous because ${\rm prox}_g$ is nonexpansive and $\nabla\!f_{k,j}$ is continuous. Then, for any $y$ sufficiently close to $\overline{y}^{k,j}$, $r_{k,j}(y)\!-\!\varrho L_{k,j}\|y\!-\!x^k\|^{q-1}\le0$. The rest only argues that for any $y$ close enough to $\overline{y}^{k,j}$, $\Theta_k(y\!-\!R_{k,j}(y))\le\Theta_k(x^k)$. Indeed, from the continuity of $R_{k,j}$ and $R_{k,j}(\overline{y}^{k,j})=0$, we have $\big(y\!-\!R_{k,j}(y),R_{k,j}(y)\!-\!\nabla\!f_{k,j}(y)\big)\to\big(\overline{y}^{k,j},-\nabla\!f_{k,j}(\overline{y}^{k,j})\big)$ as $y\to\overline{y}^{k,j}$.
 In addition, by the expression of $R_{k,j}$, for any $z\in\mathbb{X}$, $R_{k,j}(z)-\nabla\!f_{k,j}(z)\in\partial g(z\!-\!R_{k,j}(z))$. Then, the subdifferential continuity of $g$ at $\overline{y}^{k,j}$ by \cite[Example 13.30]{RW98} implies that $g(y\!-\!R_{k,j}(y))\to g(\overline{y}^{k,j})$ as $y\to\overline{y}^{k,j}$. Thus, $\Theta_{k,j}(y-\!R_{k,j}(y))\to\Theta_{k,j}(\overline{y}^{k,j})$ as $y\to\overline{y}^{k,j}$. Together with  $\Theta_{k,j}(\overline{y}^{k,j})<\Theta_{k,j}(x^k)$, we have $\Theta_{k,j}(y-R_{k,j}(y))\le\Theta_{k,j}(x^k)$. The above arguments show that for any $y$ close enough to $\overline{y}^{k,j}$, the vector $y-R_{k,j}(y)$ satisfies condition \eqref{inexact-cond}.  
	
 \noindent
 {\bf (iii)} Suppose on the contrary that the result does not hold. For all $j$ large enough,
 \begin{equation}\label{undescent}
  F(y^{k,j})>F(x^k)-\sigma(L_{k,j}/q)\|y^{k,j}-x^k\|^q.
 \end{equation}
 From $\Theta_{k,j}(y^{k,j})\le\Theta_{k,j}(x^k)$ and the expression of $\Theta_{k,j}$, for each $j$ large enough,
 \begin{equation}\label{ineq-inexactb}
  \vartheta_k(y^{k,j})+(L_{k,j}/q)\|y^{k,j}-x^k\|^q+g(y^{k,j})\le F(x^k).
 \end{equation}
 Invoking inequality \eqref{g-ineq1} with $x=y^{k,j}$ and inequality \eqref{ineq-inexactb} leads to  
 \begin{align*}
  F(x^k)&\ge\vartheta_k(y^{k,j})+(L_{k,j}/q)\|y^{k,j}\!-\!x^k\|^q
	+\langle\widehat{\xi},y^{k,j}\!-\!\widehat{x}\rangle+g(\widehat{x}),\\
  &\ge f(x^k)-\|\nabla\!f(x^k)\|\,\|y^{k,j}-x^k\|-\|\widehat{\xi}\|\|y^{k,j}\!-\!\widehat{x}\|+g(\widehat{x})\\
 &\quad+\frac{1}{2}\lambda_{\rm min}(\nabla^2\!f(x^k))\|y^{k,j}\!-\!x^k\|^2+(L_{k,j}/q)\|y^{k,j}\!-\!x^k\|^q,
 \end{align*}
 where the second inequality is due to the expression of $\vartheta_k(y^{k,j})$. Recall that $L_{k,j}\to\infty$ as $j\to\infty$. The above inequality, along with $q\in[2,3]$, implies that $\lim_{j\to\infty}y^{k,j}=x^k$. Next we claim that $\liminf_{j\to\infty}L_{k,j}\|y^{k,j}-x^k\|^{q-1}>0$. If not, there exists $J\subset\mathbb{N}$ such that $\liminf_{J\ni j\to\infty}L_{k,j}\|y^{k,j}-x^k\|^{q-1}=0$, which by the first inequality of \eqref{inexact-cond} implies that $\lim_{J\ni j\to\infty}r_{k,j}(y^{k,j})=0$. Along with the expression of $R_{k,j}$, we have
 \begin{equation*}
  0=\lim_{J\ni j\to\infty}\big[y^{k,j}\!-\!{\rm prox}_g(y^{k,j}\!-\!\nabla\!\vartheta_{k}(y^{k,j})\!-\!L_{k,j}\|y^{k,j}\!-\!x^k\|^{q-2}(y^{k,j}\!-\!x^k))\big],
 \end{equation*}
 which by $\lim_{j\to\infty}y^{k,j}=x^k$ implies that $r(x^k)=0$, a contradiction to $r(x^k)\ne 0$. Thus, $\liminf_{j\to\infty}L_{k,j}\|y^{k,j}\!-\!x^k\|^{q-1}>0$,
 which implies that for all sufficiently large $j\in\mathbb{N}$, 
 $(1\!-\!\sigma)({L_{k,j}}/{q})\|y^{k,j}-x^k\|^q\ge o(\|y^{k,j}-x^k\|^2)$. Together with $\lim_{j\to\infty}y^{k,j}=x^k$, the second-order Taylor expansion of $f$ at $x^k$ and $q\in[2,3]$, for all $j\in\mathbb{N}$ large enough, 
 \begin{align*}
  F(y^{k,j})-F(x^k)&=\vartheta_k(y^{k,j})+o(\|y^{k,j}-x^k\|^2)+g(y^{k,j})-F(x^k)\\
 &\le \vartheta_k(y^{k,j})+g(y^{k,j})-F(x^k)+(1\!-\!\sigma)({L_{k,j}}/{q})\|y^{k,j}\!-x^k\|^q.
 \end{align*}
 Combining this with \eqref{ineq-inexactb} yields that $F(y^{k,j})-F(x^k)\le-\sigma (L_{k,j}/q)\|y^{k,j}\!-x^k\|^q$, a contradiction to inequality \eqref{undescent}. The conclusion then follows. 
\end{proof}

 For convenience, in the rest of this paper, we let $\{x^k\}_{k\in\mathbb{N}}$ and $\{y^k\}_{k\in\mathbb{N}}$ be the sequences generated by Algorithm \ref{qregPNT}. Next we take a closer look at the  properties of the iterate and objective value sequences of Algorithm \ref{qregPNT}.  
\begin{proposition}\label{prop1-xk}  
 Under Assumption \ref{ass1}, the following assertions hold. 
 \begin{itemize}
 \item [(i)] For each $k\in\mathbb{N}$, $F(x^{k+1})\le F(y^k)\le F(x^k)-\sigma(L_{m}/q)\|y^k\!-\!x^k\|^q$; consequently, the sequence $\{F(x^k)\}_{k\in\mathbb{N}}$ is nonincreasing and convergent. 
		
 \item[(ii)] $\sum_{k=1}^{\infty}\|y^k\!-\!x^k\|^q<\infty$ and hence $\lim_{k\to\infty}(y^k-x^k)=0$.
		
\item [(iii)] If $\{x^k\}_{k\in\mathbb{N}}$ is bounded, then the sequence $\{y^k\}_{k\in\mathbb{N}}$ is bounded, and the set of accumulation points of $\{x^k\}_{k\in\mathbb{N}}$, denoted by $\omega(x^0)$, is nonempty and compact.
\end{itemize}
\end{proposition} 

For the subsequent convergence analysis, we need the following assumption:
\begin{aassumption}\label{ass2}
{\bf(i)} The sequence $\{x^k\}_{k\in\mathbb{N}}$ is bounded; 
	
 \noindent
 {\bf(ii)} $\nabla^2\!f$ is strictly continuous on an open convex set $\mathcal{N}$ with $\mathcal{O}\supset\mathcal{N}\!\supset\omega(x^0)$.
\end{aassumption}

 Assumption \ref{ass2} (i) is implied by the boundedness of the level set of $F$ associated with $F(x^0)$. Assumption \ref{ass2} (ii) is weaker than its global version, a common condition for the convergence rate analysis of the CR method (see \cite{Doikov20,Doikov22,Yue19cubic}). Under Assumptions \ref{ass1}-\ref{ass2}, we can establish the following conclusion for the sequences $\{x^k\}_{k\in\mathbb{N}}$ and $\{y^k\}_{k\in\mathbb{N}}$, which plays a crucial role in the convergence analysis of Section \ref{sec4}.
\begin{proposition}\label{prop2-xk}  
 Under Assumptions \ref{ass1}-\ref{ass2}, the following statements hold.  
 \begin{itemize}	
 \item[(i)] There exists a constant $\widehat{L}>0$ such that $L_k\le\widehat{L}$ for all $k\in\mathbb{N}$.
		
 \item[(ii)] $\lim_{k\to\infty}v^k=\lim_{k\to\infty}R_k(y^k)=0$ and $\lim_{k\to\infty}r(x^k)=0$.
		
 \item[(iii)] $\omega(x^0)$ is a nonempty connected compact set, and $\omega(x^0)\subset\mathcal{S}^*$. 
 
 \item[(iv)] For each $\overline{x}\in\omega(x^0)$, there exists a subsequence $\{x^{k_{\nu}}\}_{\nu\in\mathbb{N}}$ such that $x^{k_{\nu}}\to\overline{x}$ and $F(x^{k_{\nu}})\to F(\overline{x})$ as $\nu\to\infty$, and hence $F(\overline{x})=\lim_{k\to\infty}F(x^k)\!:=\overline{F}$.
\end{itemize}
\end{proposition}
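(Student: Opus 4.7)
The four assertions build on each other: (i) supplies a uniform regularization bound, which drives (ii) via the inexactness criterion; (iii) then follows by combining (ii) with a standard cluster-set argument; and (iv) uses the selection rule \eqref{new-iter} together with the variational characterization of $\mathrm{prox}_g$.

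For part (i), my plan is to extend the backtracking analysis of Lemma~\ref{well-def}(iii) to obtain a bound independent of $k$. Assumption~\ref{ass2}(i) keeps every iterate in the bounded set $\mathcal{L}_F(x^0)$, and Assumption~\ref{ass2}(ii) furnishes a neighborhood $\mathcal{N}$ of the compact set $\omega(x^0)$ on which $\nabla^2\!f$ is Lipschitz with some modulus $L_{\nabla^2}$. A second-order Taylor expansion then gives
\[
f(y)\le\vartheta_k(y)+(L_{\nabla^2}/6)\|y-x^k\|^3\quad\text{whenever the segment }[x^k,y]\subset\mathcal{N}.
\]
The inexactness requirement $\Theta_{k,j}(y^{k,j})<\Theta_{k,j}(x^k)$ bounds $L_{k,j}\|y^{k,j}-x^k\|^q$, so once $k$ is large (so $x^k$ is deep in $\mathcal{N}$) and $L_{k,j}$ exceeds a fixed threshold, $y^{k,j}$ stays in $\mathcal{N}$. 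Combined with the second inequality of \eqref{inexact-cond}, this yields
\[
F(y^{k,j})< F(x^k)-(L_{k,j}/q)\|y^{k,j}-x^k\|^q+(L_{\nabla^2}/6)\|y^{k,j}-x^k\|^3.
\]
Because $q\in[2,3]$ and $\|y^{k,j}-x^k\|$ is uniformly bounded, the cubic error is absorbed into $((1-\sigma)L_{k,j}/q)\|y^{k,j}-x^k\|^q$ once $L_{k,j}$ exceeds a $k$-independent threshold, forcing step~(2b) to succeed. Together with the finitely many early indices handled by Lemma~\ref{well-def}(iii), this produces the uniform bound $\widehat{L}$.

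For (ii), Proposition~\ref{prop1-xk}(ii) provides $y^k-x^k\to 0$, whence the inexactness criterion and (i) give $\|v^k\|=\|R_k(y^k)\|\le\varrho\widehat{L}\|y^k-x^k\|^{q-1}\to 0$. To pass from $R_k(y^k)\to 0$ to $r(x^k)\to 0$, I would compare $R$ and $R_k$ at $y^k$: by the non-expansivity of $\mathrm{prox}_g$ and a second-order Taylor remainder,
\[
\|R(y^k)-R_k(y^k)\|\le\|\nabla\!f_k(y^k)-\nabla\!f(y^k)\|\le(L_{\nabla^2}/2)\|y^k-x^k\|^2+\widehat{L}\|y^k-x^k\|^{q-1}\to 0.
\]
Hence $r(y^k)\to 0$, and since $\|x^{k+1}-y^k\|\le\|v^k\|\to 0$, the continuity of $R$ yields $r(x^k)\to 0$. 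This also gives $\|x^{k+1}-x^k\|\to 0$, which together with Proposition~\ref{prop1-xk}(iii) furnishes the nonemptiness, compactness, and (by the classical ``vanishing increments'' result) connectedness of $\omega(x^0)$ in (iii); the inclusion $\omega(x^0)\subset\mathcal{S}^*$ is then immediate from the continuity of $r$.

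For (iv), pick a subsequence $x^{k_\nu}\to\overline{x}$; by (ii) also $y^{k_\nu}\to\overline{x}$ and $y^{k_\nu}-v^{k_\nu}\to\overline{x}$. Applying the defining inequality of $y^{k_\nu}-v^{k_\nu}=\mathrm{prox}_g(y^{k_\nu}-\nabla\!f_{k_\nu}(y^{k_\nu}))$ at the test point $\overline{x}$ and passing to the limit yields $\limsup_\nu g(y^{k_\nu}-v^{k_\nu})\le g(\overline{x})$, which combined with the lower semicontinuity of $g$ forces $g(y^{k_\nu}-v^{k_\nu})\to g(\overline{x})$, hence $F(y^{k_\nu}-v^{k_\nu})\to F(\overline{x})$. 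The selection rule \eqref{new-iter} guarantees $F(x^{k_\nu+1})\le F(y^{k_\nu}-v^{k_\nu})$, so in the limit $\overline{F}\le F(\overline{x})$; the lsc of $F$ supplies the reverse inequality, yielding $F(\overline{x})=\overline{F}$.

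\textbf{Main obstacle.} The delicate step is (i): the Taylor bound only applies when $y^{k,j}$ lies in $\mathcal{N}$, so I must simultaneously control the localization $y^{k,j}\to x^k$ (extracted, as in Lemma~\ref{well-def}(iii), from the inexactness requirement $\Theta_{k,j}(y^{k,j})<\Theta_{k,j}(x^k)$) and absorb the resulting cubic error into the $q$-th power regularization — which is precisely where $q\le 3$ and the boundedness of $\mathcal{L}_F(x^0)$ are essential.
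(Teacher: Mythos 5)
Your proposal is correct, and for parts (i)--(iii) it rests on the same ingredients as the paper's proof: the second inequality of \eqref{inexact-cond} forces $\|y^{k,j}-x^k\|$ to be small once $L_{k,j}$ is large (with constants uniform in $k$ thanks to the boundedness of $\mathcal{L}_F(x^0)$), the Lipschitz continuity of $\nabla^2\!f$ on a compact convex neighborhood of $\omega(x^0)$ yields a cubic Taylor error that $q\le 3$ lets you absorb into the regularization term, the nonexpansiveness of ${\rm prox}_g$ drives (ii), and the vanishing-increment argument gives connectedness in (iii). The organizational differences are worth noting. In (i) you argue directly that step (2b) must succeed once $L_{k,j}$ exceeds a $k$-independent threshold, whereas the paper argues by contradiction using the penultimate trial value $\tau^{-1}L_k$; the two are equivalent, but your version makes you explicitly responsible for the uniformity of the threshold and for the finitely many early indices, which you correctly dispatch via Lemma \ref{well-def}(iii). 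In (ii) you compare $R$ and $R_k$ at the common point $y^k$ and then transfer to $x^k$, which requires uniform continuity of $R$ on the compact set of iterates rather than the pointwise continuity you invoke (a cosmetic fix); the paper instead bounds $\|R(x^k)-R_k(y^k)\|$ directly. Part (iv) is where you genuinely diverge: the paper splits into the cases $x^{k_\nu}=y^{k_\nu-1}$ and $x^{k_\nu}=y^{k_\nu-1}-v^{k_\nu-1}$ and estimates $\limsup_\nu F(x^{k_\nu})$ backwards from index $k_\nu-1$, while you work forwards, proving $F(y^{k_\nu}-v^{k_\nu})\to F(\overline{x})$ from the variational characterization of the prox point and then squeezing $\overline{F}\le F(\overline{x})\le\overline{F}$ via $F(x^{k_\nu+1})\le F(y^{k_\nu}-v^{k_\nu})$, the convergence of the whole sequence $\{F(x^k)\}$, and the lower semicontinuity of $F$. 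Your route avoids the case analysis entirely and is arguably cleaner, while resting on the same relation $v^k-\nabla\!f_k(y^k)\in\partial g(y^k-v^k)$.
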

\begin{proof}
 {\bf (i)} Suppose on the contrary that the result does not hold. There exists $\mathcal{K}\!\subset\mathbb{N}$ such that $\lim_{\mathcal{K}\ni k\to\infty}L_k\!=\infty$. For each $k\in\mathcal{K}$, let $\widetilde{L}_k\!:=\tau^{-1}L_k$. Clearly, $j_{k}>1$ for all sufficiently large $k\in\mathcal{K}$. From step (2b), for all sufficiently large $k\in\mathcal{K}$,
 \begin{equation}\label{ineq31}
  F(y^{k,j_k\!-\!1})>F(x^k)-\sigma(\widetilde{L}_k/q)\|y^{k,j_k\!-\!1}\!-\!x^k\|^q.
 \end{equation}
 Note that $\Theta_{k,j_{k}-1}(y^{k,j_{k}-1})\le\!\Theta_{k,j_{k}-1}(x^k)\!=F(x^k)$ and  $L_{k,j_k-1}\!=\widetilde{L}_k$. For each $k\in\mathcal{K}$, 
 \begin{align}\label{ineq32}
  q^{-1}\widetilde{L}_{k}\|y^{k,j_k\!-\!1}\!-\!x^k\|^q
  &\le F(x^k)\!-\!g(y^{k,j_k\!-\!1})-\vartheta_k(y^{k,j_k\!-\!1})\\
  &\le F(x^0)\!-\!g(y^{k,j_k\!-\!1})-\vartheta_k(y^{k,j_k\!-\!1})\nonumber\\
  &\le F(x^0)-g(\widehat{x})+\langle\widehat{\xi},\widehat{x}\!-\!y^{k,j_k\!-\!1}\rangle-\vartheta_k(y^{k,j_k\!-\!1})\nonumber\\
  &\le F(x^0)\!-\!f(x^k)-g(\widehat{x})+\|\widehat{\xi}\|\big(\|y^{k,j_k\!-\!1}\!-\!x^k\|\!+\!\|x^k\!-\!\widehat{x}\|\big)\nonumber\\
  &\quad+\|\nabla\!f(x^k)\|\|y^{k,j_k\!-\!1}\!-\!x^k\|\!+\!\frac{1}{2}\|\nabla^2\!f(x^k)\|\|y^{k,j_k\!-\!1}\!-\!x^k\|^2\nonumber
  \end{align}
  where the second inequality is by Proposition \ref{prop1-xk} (i), and the third one is obtained by using \eqref{g-ineq1} with $x=y^{k,j_k\!-\!1}$. From Assumption \ref{ass2} (i), the continuity of $\nabla^2\!f$ on ${\rm dom}\,g$ and $\lim_{\mathcal{K}\ni k\to\infty}\widetilde{L}_k=\infty$, the above inequality implies that $\lim_{\mathcal{K}\ni k\to\infty}\|y^{k,j_k\!-\!1}\!-\!x^k\|=0$. Then, for each $k\in\mathcal{K}$ large enough, using the mean-value theorem and Assumption \ref{ass2} (ii) and following the same arguments as those for \cite[Lemma 4.2 (iv)]{QianPan22} leads to $(1\!-\!\sigma)(\widetilde{L}_k/q)\|y^{k,j_k\!-\!1}\!-\!x^k\|^q\le\frac{c_0}{2}\|y^{k,j_k\!-\!1}\!-\!x^k\|^{3}$ for some $c_0>0$, which is impossible because $\lim_{\mathcal{K}\ni k\to\infty}\widetilde{L}_k=\infty$, $y^{k,j_k\!-\!1}\neq x^k$ by \eqref{ineq31} and $\lim_{\mathcal{K}\ni k\to\infty}\|y^{k,j_k\!-\!1}\!-\!x^k\|=0$. 
	
 \noindent
 {\bf(ii)} By Proposition \ref{prop1-xk} (ii), we have $\lim_{k\to\infty}\|y^k\!-\!x^k\|=0$. Along with part (i), the inexactness condition \eqref{inexact-cond} and the definition of $v^k$, we have $\lim_{k\to\infty}v^k=0$. By the definition of function $r$ in \eqref{Rmap} and the nonexpansiveness of ${\rm prox}_{g}$, for each $k\in\mathbb{N}$, 
 \begin{align*}
  &r(x^k)\le\|R(x^k)\!-\!R_k(y^k)\|+r_k(y^k)\nonumber\\
  &\le\|x^k\!-\!y^k+{\rm prox}_g(y^k\!-\!\nabla\!f_k(y^k)) 
     -{\rm prox}_g(x^k\!-\!\nabla\!f(x^k))\|+\varrho L_k\|y^k\!-\!x^k\|^{q-1}\nonumber\\
  &\le2\|y^k\!-\!x^k\|+\|\nabla^2\!f(x^k)(y^k\!-\!x^k)+L_k\|y^k\!-\!x^k\|^{q-2}(y^k\!-\!x^k)\|+\varrho L_k\|y^k\!-\!x^k\|^{q-1}\nonumber\\ 
  &\le2\|y^k\!-\!x^k\|+\|\nabla^2\!f(x^k)\|\|y^k\!-\!x^k\|+(\varrho\!+\!1)L_k\|y^k\!-\!x^k\|^{q-1}.
 \end{align*}
 Passing the limit $k\to\infty$ to this inequality and using Proposition \ref{prop1-xk} (ii), Assumption \ref{ass2} (i) and the boundeness of $\{L_k\}_{k\in\mathbb{N}}$ in part (i) yields that  $\lim_{k\to\infty}r(x^k)=0$.
	
 \noindent
 {\bf (iii)} By Proposition \ref{prop1-xk} (iii), it suffices to argue that $\omega(x^0)$ is connected. Note that $\lim_{k\to\infty}v^k=0$ by part (ii) and $\lim_{k\to\infty}(y^k-x^k)=0$. Along with \eqref{new-iter}, we have $\lim_{k\to\infty}\|x^{k+1}\!-\!x^k\|=0$. Using this limit and the same arguments as those for \cite[Lemma 5]{Bolte14} results in the connectedness of $\omega(x^0)$.
 Note that $r(\cdot)$ is continuous relative to ${\rm cl}\,({\rm dom}\,g)$ because ${\rm prox}_g$ is nonexpansive and $\nabla\!f$ is continuous relative to ${\rm cl}\,({\rm dom}\,g)$ by Assumption \ref{ass1} (i). The inclusion $\omega(x^0)\subset\mathcal{S}^*$ is immediate by the continuity of the function $r$ relative to ${\rm cl}\,({\rm dom}\,g)$ and part (ii).

 \noindent
 {\bf (iv)} Pick any $\overline{x}\in\omega(x^0)$. Then there exists a subsequence $\{x^{k_{\nu}}\}_{\nu\in\mathbb{N}}$ such that $x^{k_{\nu}}\to\overline{x}$ as $\nu\to\infty$. For each $k\in\mathbb{N}$, from the inclusion in \eqref{yk-vk} and the convexity of $g$,
 \begin{equation}\label{ineq-gconv}
 g(x)\ge g(y^k\!-\!v^k)+\langle v^k\!-\!\nabla\!f_{k,j_k}(y^k),x-(y^k\!-\!v^k)\rangle\quad\forall x\in{\rm dom}g.
 \end{equation} 
 From Assumption \ref{ass2} (i) and Proposition \ref{prop1-xk} (ii), the sequence $\{y^k\}_{k\in\mathbb{N}}$ is bounded and $\lim_{\nu\to\infty}y^{k_{\nu}}=\overline{x}$. Define $\mathcal{V}\!:=\{\nu\in\mathbb{N}\,|\,x^{k_{\nu}}=y^{k_{\nu}\!-\!1}\}$ and $\overline{\mathcal{V}}\!:=\mathbb{N}\backslash\mathcal{V}$. Obviously, $\overline{\mathcal{V}}=\{\nu\in\mathbb{N}\,|\,x^{k_{\nu}}=y^{k_{\nu}\!-\!1}\!-\!v^{k_{\nu}\!-\!1}\}$. For any $\nu\in\mathcal{V}$, by invoking \eqref{ineq-gconv} with $x=\overline{x}$ and $k=k_{\nu}\!-\!1$, from equation \eqref{new-iter} it follows that
 \begin{align*}
  &F(x^{k_{\nu}})=F(y^{k_{\nu}\!-\!1})<F(y^{k_{\nu}\!-\!1}\!-\!v^{k_{\nu}\!-\!1})\\
  &\le f(y^{k_{\nu}\!-\!1}\!-\!v^{k_{\nu}\!-\!1})+g(\overline{x})-\langle v^{k_{\nu}\!-\!1}\!-\!\nabla\!f_{k_{\nu}\!-\!1}(y^{k_{\nu}\!-\!1}),\overline{x}-(y^{k_{\nu}\!-\!1}\!-\!v^{k_{\nu}\!-\!1})\rangle\\
  &=f(x^{k_{\nu}}\!-\!v^{k_{\nu}\!-\!1})+g(\overline{x})-\langle v^{k_{\nu}\!-\!1}\!-\!\nabla\!f_{k_{\nu}\!-\!1}(x^{k_{\nu}}),\overline{x}-(x^{k_{\nu}}\!-\!v^{k_{\nu}\!-\!1})\rangle.
  \end{align*}
  Along with $\lim_{k\to\infty}v^k=0$ by part (ii) and the continuity of $\nabla\!f_{k_{\nu}-1}$ on ${\rm dom}\,g$, passing $\mathcal{V}\ni\nu\to\infty$ to the above inequality leads to $\limsup_{\mathcal{V}\ni \nu\to\infty}F(x^{k_{\nu}})\le F(\overline{x})$. Together with the lower semicontinuity of $F$, we get $\lim_{\mathcal{V}\ni \nu\to\infty}F(x^{k_{\nu}})=F(\overline{x})$. For any $\nu\in\overline{\mathcal{V}}$,  invoking \eqref{ineq-gconv} with $x=\overline{x}$ and $k=k_{\nu}-1$ and then passing the limit $\overline{\mathcal{V}}\ni\nu\to\infty$ to the obtained inequality leads to
  \begin{align*}
   \limsup_{\overline{\mathcal{V}}\ni\nu\to\infty}g(x^{k_{\nu}})
   &\le g(\overline{x})+\limsup_{\overline{\mathcal{V}}\ni\nu\to\infty}\,\langle v^{k_{\nu}\!-\!1}\!-\!\nabla\!f_{k_{\nu}\!-\!1}(y^{k_{\nu}\!-\!1}),(y^{k_{\nu}\!-\!1}\!-\!v^{k_{\nu}\!-\!1})-\overline{x}\rangle\\		&=g(\overline{x})+\limsup_{\overline{\mathcal{V}}\ni\nu\to\infty}\,\langle v^{k_{\nu}\!-\!1}\!-\!\nabla\!f_{k_{\nu}\!-\!1}(x^{k_{\nu}}+v^{k_{\nu}\!-\!1}),x^{k_{\nu}}-\overline{x}\rangle=g(\overline{x}),
   \end{align*}
   where the second equality is due to $x^{k_{\nu}}\to\overline{x}$ as $\nu\to\infty$. Together with the lower semicontinuity of $g$, we obtain  $\lim_{\overline{\mathcal{V}}\ni\nu\to\infty}g(x^{k_{\nu}})= g(\overline{x})$, so $\lim_{\overline{\mathcal{V}}\ni\nu\to\infty}F(x^{k_{\nu}})= F(\overline{x})$. Along with $\lim_{\mathcal{V}\ni\nu\to\infty}F(x^{k_{\nu}})=F(\overline{x})$, we obtain $\lim_{\nu\to\infty} F(x^{k_{\nu}})=F(\overline{x})$.
 \end{proof}
 \section{Convergence analysis of Algorithm \ref{qregPNT}}\label{sec4}

 We shall focus on the full convergence of $\{x^k\}_{k\in\mathbb{N}}$,  the local superlinear convergence rates of $\{x^k\}_{k\in\mathbb{N}}$ and $\{F(x^k)\}_{k\in\mathbb{N}}$,  and the global complexity bounds in terms of the first-order optimality conditions. First, we discuss the Lipschitz continuity of $\nabla\!f$ and $\nabla^2\!f$ on a compact convex set under Assumptions \ref{ass1}-\ref{ass2}. Recall that $\omega(x^0)$ is nonempty and compact by Proposition \ref{prop2-xk} (iii). From the openness and convexity of $\mathcal{N}\supset\omega(x^0)$, $\lim_{k\to\infty}{\rm dist}(x^k,\omega(x^0))=0$, $\lim_{k\to\infty}v^k=0$ and $\lim_{k\to\infty}\|x^k\!-\!y^k\|=0$, there necessarily exist a compact convex set $\Gamma\subset\mathcal{N}$, a constant $\overline{\varepsilon}>0$ and an index  $\overline{k}\in\mathbb{N}$ such that for all $k\ge \overline{k}$,
 \begin{align}\label{ineq-Gamma}
 x^k\in\!\!\bigcup_{x\in\omega(x^0)}\!\!\mathbb{B}(x,\overline{\varepsilon}/2)\subset\!\!\bigcup_{x\in\omega(x^0)}\!\!\mathbb{B}(x,\overline{\varepsilon})\subset\Gamma\ \ {\rm and}\ \ y^k,\,y^k\!-\!v^k\in\Gamma.
 \end{align}
 Recall that $\mathcal{N}\!\subset\mathcal{O}$. From $\Gamma\!\subset\mathcal{N}$ and Assumption \ref{ass1} (i), there exists $L_{g}\!>0$ such that
 \begin{equation}\label{grad-Lip}
  \|\nabla\!f(y)-\nabla\!f(x)\|\le L_{g}\|y-x\|\quad\forall x,y\in\Gamma.
 \end{equation}
 By Assumption \ref{ass2} (ii), there exists a constant $L_H>0$ such that for all $x,y\in\Gamma$
\begin{equation}\label{Hessf-Lip}
 \|\nabla^2\!f(x)-\nabla^2\!f(y)\|\le L_{H}\|x-y\|.
\end{equation}
 Along with the convexity of $\Gamma$ and the integral expression of $\nabla\!f(y)\!-\!\nabla\!f(x)$, we have
 \begin{equation}\label{Hessf-ineq}
  \|\nabla\!f(y)\!-\!\nabla\!f(x)\!-\!\nabla^2\!f(x)(y\!-\!x)\|
  \le ({L_H}/{2})\|y-x\|^2\quad\ \forall x,y\in\Gamma.
 \end{equation}
 
\subsection{Global convergence}\label{sec4.1}

To establish the full convergence of the sequence $\{x^k\}_{k\in\mathbb{N}}$, 
 we need to introduce the following potential function
\begin{equation}\label{Phi-fun}
 \Phi(w)\!:=F(y-v)\quad\ \forall\, w=(y,v)\in\mathbb{X}\times\mathbb{X}. 
\end{equation}
 By \cite[Exercise 10.7]{RW98}, $\partial\Phi(w)=\{(\xi;-\xi)\ |\ \xi\in\partial F(y-v)\}$ for any $w=(y,v)$. Hence, if $w^*=(y^*,v^*)$ is a critical point of $\Phi$, then $x^*=y^*-v^*$ is a critical point of $F$; and conversely, if $x^*$ is a critical point of $F$, then $w^*=(x^*,0)$ is a critical point of $\Phi$. By the expression of $\Phi$ and the proof of \cite[Theorem 3.2]{LiPong18}, the following fact holds. 
 \begin{fact}\label{fact1}
  If $F$ is a KL function (of exponent $\theta\in[0,1)$), then so is the function $\Phi$.
\end{fact}
  
 In the rest of this section, for each $k\in\mathbb{N}$, write $w^k\!:=(y^k,v^k)$. By invoking condition \eqref{inexact-cond}, we can prove that ${\rm dist}(0,\partial\Phi(w^k))$ is upper bounded by $\|y^k\!-\!x^k\|^{q-1}$. 
\begin{lemma}\label{lem-relgap}
 Under Assumptions \ref{ass1}-\ref{ass2}, we have ${\rm dist}(0,\partial\Phi(w^k))\!\le\!\beta\|y^k\!-\!x^k\|^{q-1}$ for all $k\ge \overline{k}$, where $\beta:=\!\sqrt{2}[\varrho(1\!+\!L_g)\!+\!1]\widehat{L}+\!\frac{\sqrt{2}}{2}L_H$ with $\widehat{L}$ from Proposition \ref{prop2-xk} (i).
\end{lemma}
\begin{proof}
 By using previous \eqref{ineq-Gamma} and \eqref{Hessf-ineq} with $y=y^k$ and $x=x^k$, for all $k\ge \overline{k}$, 
 \begin{equation}\label{ineq-quad}
 \|\nabla\!f(y^k)-\nabla\!f(x^k)-\nabla^2\!f(x^k)(y^k\!-\!x^k)\|\le({L_H}/{2})\|y^k\!-\!x^k\|^{2}.
 \end{equation}  
 Fix any $k\ge\overline{k}$. From the inclusion in \eqref{yk-vk} and the expression of $\partial\Phi$, it holds that 
 \[
   \zeta^k\!:=\left(\begin{matrix}
   \nabla\!f(y^k\!-\!v^k)\!+\!v^k\!-\!\nabla\!f_{k,j_k}(y^k)\\
    -\nabla\!f(y^k\!-\!v^k)\!-\!v^k\!+\!\nabla\!f_{k,j_k}(y^k)
   \end{matrix}\right)\in\partial\Phi(w^k).
 \]
 By using the expression of $f_{k,j_k}$ and \eqref{grad-Lip} with $(y,x)=(y^k\!-\!v^k,y^k)\in\Gamma\!\times\Gamma$ leads to
 \begin{align}\label{zetak-bound}
  \|\zeta^k\|
  &=\sqrt{2}\big\| \nabla\!f(y^k\!-\!v^k)\!+\!v^k\!-\!\nabla\!f_{k,j_k}(y^k)\big\|\nonumber\\
  &=\sqrt{2}\|\nabla\!f(y^k\!-\!v^k)\!+\!v^k\!-\!\nabla\!f(x^k)\!-\!\nabla^2\!f(x^k)(y^k\!-\!x^k)\!-\!L_k\|y^k\!-\!x^k\|^{q-2}(y^k\!-\!x^k)\|\nonumber\\
  &\le\sqrt{2}\big[\|v^k\|+\|\nabla\!f(y^k\!-\!v^k)-\nabla\!f(y^k)\|+L_k\|y^k\!-\!x^k\|^{q-1}\nonumber\\
  &\qquad\quad +\|\nabla\!f(y^k)-\nabla\!f(x^k)-\nabla^2\!f(x^k)(y^k\!-\!x^k)\|\big]\nonumber\\
  &\stackrel{\eqref{ineq-quad}}{\le}\sqrt{2}\big[(1\!+\!L_g)\|v^k\|+({L_H}/{2})\|y^k\!-\!x^k\|^{2}+L_k\|y^k\!-\!x^k\|^{q-1}\big].
  \end{align}
  Note that $\|v^k\|=r_k(y^k)\le\varrho L_k\|y^k\!-\!x^k\|^{q-1}\le\varrho\widehat{L}\|y^k\!-\!x^k\|^{q-1}$ by condition \eqref{inexact-cond} and Proposition \ref{prop2-xk} (i); and $\|y^k\!-\!x^k\|^{2}\le\|y^k\!-\!x^k\|^{q-1}$ because $\|y^k\!-\!x^k\|<1$ for all $k\ge\overline{k}$ (if necessary by increasing $\overline{k}$) by Proposition \ref{prop1-xk} (ii). The conclusion then holds. 
\end{proof}

Lemma \ref{lem-relgap} provides a relative inexact optimality condition for minimizing $\Phi$ with the sequence $\{(x^k,y^k,v^k)\}_{k\in\mathbb{N}}$.  Unfortunately, the sufficient decrease of the sequence $\{\Phi(w^k)\}_{k\in\mathbb{N}}$ cannot be achieved though Proposition \ref{prop1-xk} (i) provides the sufficient decrease of $\{F(x^k)\}_{k\in\mathbb{N}}$ in terms of $\|y^k\!-\!x^k\|^{q-1}$. Thus, the recipe developed in \cite{Attouch13,Bolte14} and its generalized version in \cite{QianPan22} cannot be directly applied to obtain the convergence of the sequence $\{x^k\}_{k\in\mathbb{N}}$. Next we prove the full convergence of $\{x^k\}_{k\in\mathbb{N}}$ by skillfully combining the sufficient decrease of $\{F(x^k)\}_{k\in\mathbb{N}}$ with the KL inequality on $\Phi$.
\begin{theorem}\label{globalconv}
 Suppose that Assumptions \ref{ass1}-\ref{ass2} hold, and that $F$ is a KL function. Then, $\sum_{k=0}^{\infty}\|x^{k+1}\!-\!x^k\|<\infty$, and consequently, $\{x^k\}_{k\in\mathbb{N}}$ converges to a point $\overline{x}\in\mathcal{S}^*$. 
\end{theorem}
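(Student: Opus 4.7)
The plan is to combine the sufficient decrease on $F$ from Proposition \ref{prop1-xk}(i) with the KL property of the auxiliary function $\Phi$ (Fact \ref{fact1}) and the relative subgradient bound from Lemma \ref{lem-relgap}, bridging the two via the observation $\Phi(w^k)\ge F(x^{k+1})$.

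First, I dispose of the trivial case. If $F(x^k)=\overline{F}$ for some $k$, the monotonicity from Proposition \ref{prop1-xk}(ii) forces $F(x^n)=\overline{F}$ for all $n\ge k$, hence $\|y^n-x^n\|=0$ by the descent estimate, hence $v^n=0$ by \eqref{inexact-cond}, and finally $x^{n+1}=x^n$ by \eqref{new-iter}. Thus I may assume $\Delta_k:=F(x^k)-\overline{F}>0$ and $\|y^k-x^k\|>0$ for every $k$. Next I verify $\Phi(w^k)\to\overline{F}$: since $\|v^k\|\to 0$ by Proposition \ref{prop2-xk}(ii), the cluster set of $\{w^k\}$ is $\omega(x^0)\times\{0\}$, and a subgradient-inequality argument mimicking Proposition \ref{prop2-xk}(iv), applied to the inclusion $v^k-\nabla\!f_k(y^k)\in\partial g(y^k-v^k)$, yields $g(y^k-v^k)\to g(\overline{x})$ along any subsequence with $y^k-v^k\to\overline{x}\in\omega(x^0)$, so $\Phi(w^k)=F(y^k-v^k)\to\overline{F}$.

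I then invoke the uniformized KL property of $\Phi$ on the compact set $\omega(x^0)\times\{0\}$ (see \cite{Bolte14}): there exist $\delta,\varpi>0$ and $\varphi\in\Upsilon_{\varpi}$ such that $\varphi'(\Phi(w)-\overline{F})\,{\rm dist}(0,\partial\Phi(w))\ge 1$ whenever $w$ lies $\delta$-close to this set and $\overline{F}<\Phi(w)<\overline{F}+\varpi$; for $k$ large, $w^k$ satisfies these conditions. The key bridge is $\Phi(w^k)\ge F(x^{k+1})$, forced by \eqref{new-iter}: it is an equality if $x^{k+1}=y^k-v^k$, and otherwise $F(x^{k+1})=F(y^k)<F(y^k-v^k)=\Phi(w^k)$ by definition of the selection. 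Combining with Lemma \ref{lem-relgap} and the monotonicity of $\varphi'$ on $(0,\varpi)$ applied at $\Delta_{k+1}\le \Phi(w^k)-\overline{F}$ gives
\[
\varphi'(\Delta_{k+1})\ \ge\ \varphi'(\Phi(w^k)-\overline{F})\ \ge\ \frac{1}{a\,\|y^k-x^k\|^{q-1}}.
\]

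Concavity of $\varphi$ together with the descent $\Delta_{k+1}-\Delta_{k+2}\ge(\sigma L_{\min}/q)\|y^{k+1}-x^{k+1}\|^q$ then yields
\[
\varphi(\Delta_{k+1})-\varphi(\Delta_{k+2})\ \ge\ \frac{(\sigma L_{\min}/q)\,\|y^{k+1}-x^{k+1}\|^q}{a\,\|y^k-x^k\|^{q-1}}.
\]
The elementary inequality $A^q/B^{q-1}\ge qA-(q-1)B$ for $A,B>0$ (verified by minimizing $A\mapsto A^q/B^{q-1}-qA$, which attains $-(q-1)B$ at $A=B$) converts this into a telescopable lower bound linear in $\|y^{k+1}-x^{k+1}\|$ and $\|y^k-x^k\|$; summing from a sufficiently large $N$ and using $\varphi(\Delta_k)\to 0$ produces $\sum_k\|y^k-x^k\|<\infty$. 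Since $\|x^{k+1}-x^k\|\le\|y^k-x^k\|+\|v^k\|$ and $\|v^k\|\le\varrho\widehat{L}\,\|y^k-x^k\|^{q-1}$ is eventually dominated by $\|y^k-x^k\|$, we obtain $\sum_k\|x^{k+1}-x^k\|<\infty$, so $\{x^k\}$ is Cauchy and converges to some $\overline{x}\in\omega(x^0)\subset\mathcal{S}^*$ by Proposition \ref{prop2-xk}(iii). The main obstacle is that $\Phi$ lacks its own sufficient-decrease estimate, blocking direct use of the Attouch--Bolte recipe; the detour through $\Phi(w^k)\ge F(x^{k+1})$ is precisely what allows the KL bound on $\Phi$ to act on the descent of $F$.
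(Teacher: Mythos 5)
Your proposal is correct and follows essentially the same route as the paper's proof: the same bridge $\Phi(w^k)\ge F(x^{k+1})$ from \eqref{new-iter}, the same uniformized KL inequality on $\Phi$ combined with Lemma \ref{lem-relgap} and the monotonicity of $\varphi'$, and the same telescoping — your inequality $A^q/B^{q-1}\ge qA-(q-1)B$ is just Young's inequality $u^{(q-1)/q}v^{1/q}\le\frac{q-1}{q}u+\frac{1}{q}v$ in disguise. The handling of the degenerate case and the final passage from $\sum\|y^k-x^k\|<\infty$ to $\sum\|x^{k+1}-x^k\|<\infty$ also match the paper.
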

\begin{proof}
 If there exists $k_0\in\mathbb{N}$ such that $F(x^{k_0})=F(x^{{k_0}+1})$, by Proposition \ref{prop1-xk} (i) we have $y^{k_0}=x^{k_0}$, which along with condition \eqref{inexact-cond} means that $r(x^{k_0})=r_{k_0}(y^{k_0})=0$, so $x^{k_0}$ is a stationary point of \eqref{prob} and Algorithm \ref{qregPNT} stops within a finite number of steps. Hence, it suffices to consider that $F(x^k)>F(x^{k+1})$ for all $k\in\mathbb{N}$. 
 Note that the sequence $\{w^k\}_{k\in\mathbb{N}}$ is bounded. Denote by $W^*$ the set of its accumulation points. Recall that $\lim_{k\to\infty}\|y^k\!-\!x^k\|=0$ and $\lim_{k\to\infty}\|v^k\|=0$. It is easy to prove that $W^*=\{(y,0)\in\mathbb{X}\times\mathbb{X}\ |\ y\in\omega(x^0)\}$, which is nonempty and compact. Also, from Proposition \ref{prop2-xk} (iv), we have $\Phi(w)=\overline{F}$ for all $w\in W^*$. Note that $\Phi$ is a KL function by Fact \ref{fact1}. From \cite[Lemma 6]{Bolte14}, there exist $\varepsilon>0,\varpi>0$ and $\varphi\in\Upsilon_{\!\varpi}$ such that for all $w\in[\overline{F}<\Phi<\overline{F}+\varpi]\cap\mathfrak{B}(W^*,\varepsilon)$ with $\mathfrak{B}(W^*,\varepsilon)\!:=\big\{w\in\mathbb{X}\times\mathbb{X}\ |\ {\rm dist}(w,W^*)\le\varepsilon\big\}$, 
 \[
  \varphi'(\Phi(w)\!-\!\overline{F}){\rm dist}(0,\partial\Phi(w))\ge 1.
 \]
 For each $k\in\mathbb{N}$, from the expression of $\Phi$ and equation \eqref{new-iter}, it follows that
 \begin{equation}\label{relationPhi}
  \Phi(w^k)-\overline{F}=F(y^k\!-\!v^k)-\overline{F}\ge F(x^{k+1})-\overline{F}>0.
  \end{equation} 
  Next we claim that $\lim_{k\to\infty}\Phi(w^k)=\overline{F}$. Indeed, invoking \eqref{ineq-gconv} with $x=\overline{x}$ for some $\overline{x}\in\omega(x^0)$ yields that
  \(
   F(y^k\!-\!v^k)\le f(y^k\!-\!v^k)+g(\overline{x})-\langle v^k\!-\!\nabla\!f_{k,j_k}(y^k),\overline{x}-(y^k\!-\!v^k)\rangle,
  \)
 which by the boundedness of $\{(y^k,v^k)\}_{k\in\mathbb{N}}$ implies that $\{F(y^k\!-\!v^k)\}_{k\in\mathbb{N}}$ is bounded. Let $\mathcal{K}\subset\mathbb{N}$ be an index set such that $\lim_{\mathcal{K}\ni k\to\infty}F(y^k\!-\!v^k)\!=\limsup_{k\to\infty}F(y^k\!-\!v^k)$. By the boundedness of $\{y^k\!-\!v^k\}_{k\in\mathbb{N}}$, there is an index set $\mathcal{K}_1\subset\mathcal{K}$ such that $\{y^k\!-\!v^k\}_{k\in \mathcal{K}_1}$ is convergent with limit, denoted by $\widehat{x}$. Together with $\lim_{k\to\infty}(y^k\!-\!x^k)=0$ and $\lim_{k\to\infty}v^k=0$, we have $\widehat{x}\in\omega(x^0)$. By invoking \eqref{ineq-gconv} with $x=\widehat{x}$, for all $k\in\mathbb{N}$,
 \[
   F(y^k\!-\!v^k)\le f(y^k\!-\!v^k)+g(\widehat{x})-\langle v^k\!-\!\nabla\!f_{k,j_k}(y^k),\widehat{x}-(y^k\!-\!v^k)\rangle.
 \]
 Passing the limit $\mathcal{K}_1\ni k\to\infty$ to this inequality and using $\lim_{\mathcal{K}\ni k\to\infty}F(y^k\!-\!v^k)\!=\limsup_{k\to\infty}F(y^k\!-\!v^k)$ leads to $\limsup_{k\to\infty}F(y^k\!-\!v^k)\le
  F(\widehat{x})=\overline{F}$. Together with $\liminf_{k\to\infty}F(y^k\!-\!v^k)\ge\overline{F}$ implied by \eqref{relationPhi}, we have $\lim_{k\to\infty}\Phi(w^k)=\lim_{k\to\infty}F(y^k-v^k)=\overline{F}$. The claimed limit holds. Note that $\lim_{k\to\infty}{\rm dist}(w^k,W^*)=0$. For all $k\ge \overline{k}$ (if necessary by increasing $\overline{k}$), we have $w^k\in[\overline{F}<\Phi<\overline{F}+\varpi]\cap\mathfrak{B}(W^*,\varepsilon)$, and consequently, $\varphi'\big(\Phi(w^k)\!-\!\overline{F}\big){\rm dist}(0,\partial\Phi(w^k))\geq 1$. In addition, by Lemma \ref{lem-relgap}, for each $k\ge \overline{k}$, ${\rm dist}(0,\partial\Phi(w^k))\le \beta\|y^k\!-\!x^k\|^{q-1}$. Consequently, for each $k\ge\widehat{k}:=\overline{k}\!+\!1$, 
 \begin{equation}\label{ineq_KL}
  \beta\varphi'\big(\Phi(w^{k-1})-\overline{F}\big)\|y^{k-1}\!-\!x^{k-1}\|^{q-1}\ge 1.
  \end{equation}
  Fix any $k\ge\widehat{k}$. As $\varphi'$ is nonincreasing on $(0,\varpi)$, combining \eqref{relationPhi} and \eqref{ineq_KL} leads to
 \begin{equation}\label{equa1-later}
 \varphi'(F(x^k)\!-\!\overline{F})\ge\varphi'(\Phi(w^{k-1})\!-\!\overline{F})
  \ge\frac{1}{\beta\|y^{k-1}\!-\!x^{k-1}\|^{q-1}}.
 \end{equation}
 Together with the concavity of $\varphi$ and Proposition \ref{prop1-xk} (i), it follows that
 \begin{align*}
 \Delta_{k,k+1}
  &:=\varphi(F(x^k)\!-\!\overline{F})-\varphi(F(x^{k+1})\!-\!\overline{F})
     \ge\varphi'(F(x^k)-\overline{F})(F(x^{k})\!-\!F(x^{k+1}))\\
  &\ge\frac{F(x^{k})\!-\!F(x^{k+1})}{\beta\|y^{k-1}-x^{k-1}\|^{q-1}}
    \ge\frac{\sigma L_{m}\|y^k\!-\!x^k\|^q}{\beta q\|y^{k-1}\!-\!x^{k-1}\|^{q-1}},
 \end{align*}
 which can be rearranged as $\|y^k\!-\!x^k\|\le\|y^{k-1}\!-\!x^{k-1}\|^{\frac{q-1}{q}}\big[\beta q(\sigma L_{m})^{-1}\Delta_{k,k+1}\big]^{\frac{1}{q}}$. 
 Using the Young's inequality $u^{\frac{q-1}{q}}v^{\frac{1}{q}}\le\frac{q-1}{q}u+\frac{1}{q}v$ for $u\ge0, v\ge0$ yields that
 \begin{equation*}
  \|y^k\!-\!x^k\|
   \le q^{-1}(q-1)\|y^{k-1}\!-\!x^{k-1}\|+\beta(\sigma L_{m})^{-1}\Delta_{k,k+1}.
  \end{equation*}
 Summing this inequality from $\widehat{k}$ to any $l>\widehat{k}$ and using the nonnegativity of $\varphi$ leads to $q{\textstyle\sum_{i=\widehat{k}}^l}\|y^i-x^i\|
\le(q\!-\!1)\big[{\textstyle\sum_{i=\widehat{k}}^l}\|y^i\!-\!x^i\|
		+\|y^{\widehat{k}-1}\!-\!x^{\widehat{k}-1}\|\big]
		+\frac{\beta q\varphi(F(x^{\widehat{k}})-\overline{F})}{\sigma L_{m}}$. Then,
 \begin{equation*}
  {\textstyle\sum_{i=\widehat{k}}^l}\|y^i-x^i\|\le(q\!-\!1)\|y^{\widehat{k}-1}\!-\!x^{\widehat{k}-1}\|
		+\beta q(\sigma L_{m})^{-1}\varphi(F(x^{\widehat{k}})-\overline{F}).
 \end{equation*}
 Passing the limit $l\to\infty$ to this inequality results in $\sum_{i=\widehat{k}}^{\infty}\|y^i-x^i\|<\infty$. From $q-1\ge 1$, it follows that  $\sum_{i=\widehat{k}}^{\infty}\|y^i-x^i\|^{q-1}<\infty$. 
 Recall that $\|v^k\|\le \varrho L_k\|y^k\!-\!x^k\|^{q-1}$ and $L_k\le\widehat{L}$ for all $k\in\mathbb{N}$. Hence, 
 ${\textstyle\sum_{i\ge\widehat{k}}}\|v^i\|<\infty$. Now, from \eqref{new-iter}, we have
 \begin{align*}
  {\textstyle\sum_{i\ge \widehat{k}}}\|x^{i+1}-x^i\|
  &\le{\textstyle\sum_{i\ge\widehat{k}}}\|y^i-x^i\|+{\textstyle\sum_{i\ge\widehat{k}}}\|v^i\|<\infty.
  \end{align*}
 This shows that the desired result holds. The proof is completed. 
\end{proof}

\subsection{Local convergence rates}\label{sec4.2}

 By Fact \ref{fact1}, if $F$ is a KL function of exponent $\theta\in[0,1)$, then $\Phi$ is a KL function of exponent $\theta\in[0,1)$. Based on the proof of Theorem \ref{globalconv}, by using  Proposition \ref{prop1-xk} and following the similar arguments to those of \cite[Theorem 3.2]{QianPan22}, one can achieve the following local convergence rate result. 
\begin{theorem}\label{KL-rate}
 Suppose that Assumptions \ref{ass1}-\ref{ass2} hold, and that $F$ is a KL function of exponent $\theta\in(0,1)$. Then, the sequence $\{x^k\}_{k\in\mathbb{N}}$ converges to a point $\overline{x}\in\mathcal{S}^*$, and 
 \begin{itemize}
 \item[(i)] if $\theta\in\!(0,\frac{q-1}{q})$, for all sufficiently large $k$, $\|x^{k+1}-\overline{x}\|\le\varepsilon\|x^{k}-\overline{x}\|^{\frac{q-1}{\theta q}}$ for any given $\varepsilon\in(0,1)$, and $F(x^{k+1})-\overline{F}\le\alpha_0(F(x^k)-\overline{F})^{\frac{q-1}{\theta q}}$ for some $\alpha_0>0$; 
 
 \item[(ii)] if $\theta\!\in\![\frac{q-1}{q},1)$, there are $\alpha\!>0$ and $\varrho\in\!(0,1)$ such that for all $k$ large enough,
		\begin{equation*}
		   \|x^k-\overline{x}\|
			\le\left\{\begin{array}{cl}
				\alpha\varrho^{k} &{\rm if}\ \theta=\frac{q-1}{q},\\
				\!\alpha k^{\frac{1-\theta}{1-(q\theta)/(q-1)}}&{\rm if}\ \theta\in(\frac{q-1}{q},1).
			\end{array}\right.
		\end{equation*}
	\end{itemize}
\end{theorem}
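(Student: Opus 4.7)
The plan is to convert the KL inequality specialized to the desingularization function $\varphi(t)=ct^{1-\theta}$ into a cascade of recursive inequalities for the error quantities $\Delta_k:=F(x^k)-\overline{F}$, $s_k:=\|y^k-x^k\|$, and the tail sum $T_k:=\sum_{i\ge k}\|x^{i+1}-x^i\|$, and then to read off the three rates by a case split on whether $\theta$ sits below, at, or above the threshold $(q-1)/q$. Global convergence $x^k\to\overline{x}\in\mathcal{S}^*$ is already supplied by Theorem~\ref{globalconv}, so only the rates remain.

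First I would specialize estimate~\eqref{equa1-later} to $\varphi(t)=ct^{1-\theta}$, obtaining, for $k$ beyond some threshold, the pointwise bound $\Delta_k^{\theta}\le M_1 s_{k-1}^{q-1}$ with $M_1$ depending only on $a,c,\theta$. Combining with the descent $s_{k-1}^q\le(q/(\sigma L_{\min}))(\Delta_{k-1}-\Delta_k)$ of Proposition~\ref{prop1-xk}(i) and eliminating $s_{k-1}$ yields the master recursion
\[
\Delta_k^{\theta q/(q-1)}\le M_2\,(\Delta_{k-1}-\Delta_k).
\]
Majorizing $\Delta_{k-1}-\Delta_k\le\Delta_{k-1}$ rearranges this to $\Delta_k\le\alpha_0\Delta_{k-1}^{(q-1)/(\theta q)}$, which for $\theta<(q-1)/q$ already delivers the objective-value rate of part~(i).

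For the iterate rate in (i), I would insert the KL specialization into tail-sum estimate~\eqref{temp-ratek} applied at any starting index $k$, together with the elementary bound $\|x^{i+1}-x^i\|\le(1+\varrho\widehat{L}s_i^{q-2})s_i\le C_0 s_i$ valid for large $i$, to get $T_k\le C_1(s_{k-1}+c\Delta_k^{1-\theta})$. Since $\theta<(q-1)/q$ is equivalent to $(q-1)(1-\theta)/\theta>1$, substituting $\Delta_k^\theta\le M_1 s_{k-1}^{q-1}$ shows the $\Delta_k^{1-\theta}$-term is asymptotically dominated by $s_{k-1}$, hence $T_k\le C_2 s_{k-1}\le C_3\Delta_{k-1}^{1/q}$; propagating the superlinear rate on $\Delta_k$ through this chain gives $T_{k+1}\le C_4 T_k^{(q-1)/(\theta q)}$. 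The resulting superlinear decay forces $T_{k+1}/T_k\to 0$, and via the identity $\|x^{k+1}-x^k\|=T_k-T_{k+1}$ one obtains the asymptotic equivalence $T_k/\|x^k-\overline{x}\|\to 1$; combining this with the $T_k$-recursion and the vanishing factor $T_k^{(q-1)/(\theta q)-1}$ delivers $\|x^{k+1}-\overline{x}\|\le\varepsilon\|x^k-\overline{x}\|^{(q-1)/(\theta q)}$ for any prescribed $\varepsilon\in(0,1)$, once $k$ is large enough.

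For part~(ii) I would re-examine the master recursion $\Delta_k^{\theta q/(q-1)}\le M_2(\Delta_{k-1}-\Delta_k)$. When $\theta=(q-1)/q$ its left-hand exponent equals $1$, so the recursion rearranges to $\Delta_k\le\rho\Delta_{k-1}$ with $\rho:=M_2/(1+M_2)\in(0,1)$; geometric decay of $\Delta_k$ propagates through $s_{k-1}\le C_3\Delta_{k-1}^{1/q}$ and $T_k\le C_1(s_{k-1}+c\Delta_k^{1/q})$ to give $\|x^k-\overline{x}\|\le\alpha\varrho^k$. When $\theta\in((q-1)/q,1)$ the exponent exceeds $1$, and invoking the standard scalar-sequence lemma on the master recursion yields $\Delta_k=O(k^{-(q-1)/(q\theta-(q-1))})$; since $\theta>(q-1)/q$ makes $c\Delta_k^{1-\theta}$ dominate $s_{k-1}$ in $T_k\le C_1(s_{k-1}+c\Delta_k^{1-\theta})$, one obtains $T_k=O(k^{-(q-1)(1-\theta)/(q\theta-(q-1))})$, which rewrites as the advertised $\alpha k^{(1-\theta)/(1-q\theta/(q-1))}$. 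The main obstacle is the passage in part~(i) from the superlinear tail-sum recursion $T_{k+1}\le C_4 T_k^{(q-1)/(\theta q)}$ to the iterate-wise form with arbitrarily small $\varepsilon$: this requires the two-sided comparison $T_k\asymp\|x^k-\overline{x}\|$, extracted from the superlinear decay of $T_k$ itself, so that the constant of the $T_k$-recursion is transferred to the iterate recursion and absorbed via the vanishing factor $T_k^{(q-1)/(\theta q)-1}$.
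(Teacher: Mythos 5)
Your proposal is correct in substance and runs on the same engine as the paper's proof: specialize the KL inequality \eqref{equa1-later} to $\varphi(t)=ct^{1-\theta}$, couple it with the sufficient decrease of Proposition \ref{prop1-xk}(i), and split on whether $\theta$ lies below or above $\frac{q-1}{q}$. The differences are organizational. For part (ii) the paper works directly with the tail sum $\sum_{j\ge k}\|y^j-x^j\|$ and the Attouch--Bolte recursion $\Delta_k\le\gamma_1(\Delta_{k-1}-\Delta_k)^{(q-1)(1-\theta)/\theta}$ obtained from \eqref{temp-ratek}, whereas you first derive the master recursion $\Delta_k^{\theta q/(q-1)}\le M_2(\Delta_{k-1}-\Delta_k)$ on the function-value gap and then transfer the resulting decay to the iterates through $T_k\le C_1(s_{k-1}+c\Delta_k^{1-\theta})$; both yield the same exponents (your $k^{-(q-1)(1-\theta)/(q\theta-(q-1))}$ is the paper's $k^{\frac{1-\theta}{1-q\theta/(q-1)}}$), and your route has the small advantage of producing the objective-value rate of part (i) as a free by-product, exactly as in \eqref{temp-ineqobj}. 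For the iterate rate in part (i) the paper runs the recursion on step lengths, $\|x^{k+1}-x^k\|\le M\|x^k-x^{k-1}\|^{\frac{q-1}{\theta q}}$ in \eqref{temp-ineq4}, using the two-sided comparison $b_2\|y^k-x^k\|\le\|x^{k+1}-x^k\|\le b_1\|y^k-x^k\|$, while you run it on the tail sums $T_k$; these are interchangeable, but note that to close your one-step recursion $T_{k+1}\le C_4T_k^{\beta}$ you need both the lower bound $T_k\ge\|x^{k+1}-x^k\|\ge b_2 s_k$ (which is the same two-sided comparison the paper proves) and the domination of $\sum_{i\ge k+1}s_i$ by its first term via \eqref{temp-ineq3}; the naive chaining $T_{k+1}\le C_2 s_k\le C_2C's_{k-1}^{\beta}\le C(T_{k-1})^{\beta}$ only gives a two-step-lagged recursion.

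One caveat. Your final step asserts that the "vanishing factor $T_k^{\frac{q-1}{\theta q}-1}$" converts $T_{k+1}\le C_4T_k^{\beta}$ into $\|x^{k+1}-\overline{x}\|\le\varepsilon\|x^k-\overline{x}\|^{\beta}$ for arbitrary $\varepsilon\in(0,1)$. That factor only shows $T_{k+1}\le\varepsilon T_k$ eventually (a vanishing \emph{linear} rate constant); it does not shrink the coefficient in front of $T_k^{\beta}$, so what your argument actually delivers is $\|x^{k+1}-\overline{x}\|\le C\|x^k-\overline{x}\|^{\beta}$ with a fixed constant $C$, together with $T_k/\|x^k-\overline{x}\|\to1$. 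This is precisely the point at which the paper itself stops computing and invokes the technique of \cite[Theorem 3.2 (Case 1)]{QianPan22}, so you are no worse off than the paper's own writeup, but the passage to an arbitrary $\varepsilon$ with the exponent kept at $\frac{q-1}{\theta q}$ is not a consequence of a fixed-constant superlinear recursion alone and should not be presented as one.
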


Theorem \ref{KL-rate} (i) shows that when $F$ is a KL function of exponent $\theta\in(0,\frac{q-1}{q})$, $\{x^k\}_{k\in\mathbb{N}}$ and $\{F(x^k)\}_{k\in\mathbb{N}}$ converge superlinearly to $\overline{x}\in\mathcal{S}^*$ and $\overline{F}$, respectively, with order $\frac{q-1}{\theta q}$, which is specified as $4/3$ for $q=3$ and $\theta=1/2$. Also, this order is the best under the KL property of $F$ with exponent $1/2$. In the rest of this section, we improve this superlinear convergence rate result for $q>2$ under a local H\"{o}lderian error bound on the set $\mathcal{X}^*$. To attain this goal, we need the following lemma. 
\begin{lemma}\label{lemma-relation}
 Fix any $x^*\!\in\!\mathcal{X}^*\cap\Gamma$. For each $k\ge\overline{k}$, the following inequalities hold:
 \begin{align*}
  L_k\|y^k\!-\!x^k\|^{q-2}\|y^k\!-\!v^k\!-\!x^{*}\|^2
  \le\|y^k\!-\!v^k\!-\!x^{*}\|\big[(L_H/2)\|x^k\!-\!x^{*}\|^{2}\qquad\qquad\ \\
  \quad +(L_k\|y^k\!-\!x^k\|^{q-3}\!+\!L_H)\|x^{k}\!-\!x^{*}\|\|y^k\!-\!x^k\|\\
  \quad +\big(1\!+\!L_k\|y^k\!-\!x^k\|^{q-2}\!+\!\|\nabla^2\!f(x^{*})\|\big)\|v^k\|\big],\\
   F(y^k\!-\!v^k)-F(x^*)\le\frac{2L_H}{3}\|y^k\!-\!v^k\!-\!x^{*}\|^3+\frac{\beta}{\sqrt{2}}\|y^k\!-\!x^k\|^{q-1}\|y^k\!-\!v^k\!-\!x^{*}\|.
 \end{align*}
 where the constant $\beta$ is the same as the one in Lemma \ref{lem-relgap}. 
 \end{lemma}
\begin{proof}
 Fix any $k\ge\overline{k}$. Let $z^k:=y^k\!-\!v^k$. From the equality in \eqref{yk-vk}, we have $z^k={\rm prox}_{g}(y^k\!-\!\nabla\!f_{k,j_k}(y^k))$, 
 which by $x^{*}\in\mathcal{X}^*$ and the definition of $\mathcal{X}^*$ means that 
 \begin{equation}\label{ineq-spoint}
  \langle z^k-x^*,\nabla^2\!f(x^*)(z^k\!-\!x^*)\rangle\ge 0.
 \end{equation}
 In addition, combining the inclusion in \eqref{yk-vk} and the expression of $f_{k,j_k}$ leads to 
 \begin{equation*}
  v^{k}\!-\!\nabla\!f_{k,j_k}(y^k)\!=v^{k}-\!\nabla\!f(x^k)-\!\nabla^2\!f(x^k)(y^k\!-\!x^k)\!
  -\!L_k\|y^k\!-\!x^k\|^{q-2}(y^k\!-\!x^k)\!\in\partial g(z^k).
 \end{equation*}
 Together with $0\in\nabla\!f(x^{*})+\partial g(x^{*})$ and the monotonicity of $\partial g$, it follows that
 \begin{align*}
  0&\le\langle z^k-x^{*},\nabla\!f(x^{*})+v^k\!-\!\nabla\!f(x^k)-\nabla^2\!f(x^k)(y^k\!-\!x^k)\rangle\\
   &\quad-L_k\|y^k\!-\!x^k\|^{q-2}\langle z^k-x^{*},y^k\!-\!x^k\rangle,
  \end{align*}
 which, after a suitable rearrangement, can be equivalently written as
  \begin{align*}
  &\langle z^k\!-\!x^{*},\big[L_k\|y^k\!-\!x^k\|^{q-2}\mathcal{I}+\nabla^2\!f(x^{*})\big](y^k\!-\!x^{*})\rangle\\
  &\le\langle z^k\!-\!x^{*},\nabla\!f(x^{*})\!-\!\nabla\!f(x^k)\!-\!\nabla^2\!f(x^{*})(x^{*}-x^k)\rangle+\langle z^k\!-\!x^{*},v^k\rangle\\
  &\quad-\langle z^k\!-\!x^{*},L_k\|y^k\!-\!x^k\|^{q-2}(x^{*}\!-\!x^k)
		+\big[\nabla^2\!f(x^k)\!-\!\nabla^2\!f(x^{*})\big](y^k\!-\!x^k)\rangle.
  \end{align*}
   Adding the term $\langle x^{*}\!-\!z^k,\big[L_k\|y^k\!-\!x^k\|^{q-2}\mathcal{I}	+\nabla^2\!f(x^{*})\big]v^k\rangle$ to this inequality leads to
  \begin{align*}
   &\langle z^k\!-\!x^{*},\big[L_k\|y^k\!-\!x^k\|^{q-2}\mathcal{I}		+\nabla^2\!f(x^{*})\big](z^k\!-\!x^{*})\rangle\\
   &\le\langle z^k\!-\!x^{*},\nabla\!f(x^{*})-\nabla\!f(x^k)-\nabla^2\!f(x^{*})(x^{*}-x^k)\rangle+\langle z^k\!-\!x^{*},v^k\rangle\nonumber\\
   &\quad-\langle z^k\!-\!x^{*},L_k\|y^k\!-\!x^k\|^{q-2}(x^{*}\!-\!x^k)
  +\big[\nabla^2\!f(x^k)\!-\!\nabla^2\!f(x^{*})\big](y^k\!-\!x^k)\rangle\nonumber\\
  &\quad-\langle z^k\!-\!x^{*},\big[L_k\|y^k\!-\!x^k\|^{q-2}\mathcal{I}		+\nabla^2\!f(x^{*})\big]v^k\rangle.
  \end{align*}
 Together with the above \eqref{ineq-spoint} and the triangle inequality, it follows that 
 \begin{align*}
  &L_k\|y^k\!-\!x^k\|^{q-2}\|z^k\!-\!x^{*}\|^2\\
  &\le\|z^k\!-\!x^{*}\|\big[\|\nabla\!f(x^{*})-\nabla\!f(x^k)-\nabla^2\!f(x^{*})(x^{*}\!-\!x^k)\|\\
  &\quad+L_k\|y^k\!-\!x^k\|^{q-2}\|x^{*}\!-\!x^k\|
		+\|\nabla^2\!f(x^k)\!-\!\nabla^2\!f(x^{*})\|\|y^k\!-\!x^k\|+\|v^k\|\\
  &\quad+L_k\|y^k\!-\!x^k\|^{q-2}\|v^k\|+\|\nabla^2\!f(x^{*})\|\|v^k\|\big].
 \end{align*}
 Recall that $x^k,x^{*}\in\Gamma$. Invoking \eqref{Hessf-Lip}-\eqref{Hessf-ineq} with $y=x^*,x=x^k$ results in 
  \begin{align*}
  &\|\nabla\!f(x^{*})\!-\!\nabla\!f(x^k)\!+\!\nabla^2\!f(x^{*})(x^{k}\!-\!x^{*})\|+\|\nabla^2\!f(x^k)\!-\!\nabla^2\!f(x^{*})\|\|y^k\!-\!x^k\|\nonumber\\
  &\le (L_H/2)\|x^k\!-\!x^{*}\|^{2}+L_H\|x^{k}\!-\!x^{*}\|\|y^k\!-\!x^k\|.
  \end{align*}
 From the above two inequalities, we obtain the first desired inequality. 
 For the second one, combining $v^{k}\!-\!\nabla\!f_{k,j_k}(y^k)\in\partial g(z^k)$ with the convexity of $g$ leads to 
 \begin{equation}\label{gfun-ineq}
   g(z^k)
  \le g(x^*)+\langle v^{k}\!-\!\nabla\!f_{k,j_k}(y^k),z^k\!-\!x^*\rangle.
 \end{equation}
 In addition, from Assumption \ref{ass1} (i) and the integral formula, it follows that
 \begin{align*}
 f(z^k)\!-\!f(x^*)&=\langle\nabla\!f(x^*),z^k\!-\!x^*\rangle+\Big\langle z^k\!-\!x^*,\int_{0}^1\!\!\int_{0}^t\nabla^2\!f(x^*\!+\!s(z^k\!-\!x^*))(z^k\!-\!x^*)dsdt\Big\rangle\\
    &\!=\!\Big\langle z^k\!-\!x^*,\!\int_{0}^1\!\!\int_{0}^t\!\nabla^2\!f(x^*\!+\!s(z^k\!-\!x^*))(z^k\!-\!x^*)dsdt\Big\rangle\\ 
    &\quad+\langle\nabla\!f(z^k),z^k\!-\!x^*\rangle-\Big\langle z^k\!-\!x^*,\int_0^1\nabla^2f(x^*\!+t(z^k\!-\!x^*))(z^k\!-\!x^*)dt\Big\rangle\\
    &\stackrel{\eqref{ineq-spoint}}{\le}\!\Big\langle z^k\!-\!x^*,\!\int_{0}^1\!\!\int_{0}^t\!\big[\nabla^2\!f(x^*\!+\!s(z^k\!-\!x^*))\!-\!\nabla^2\!f(x^*)\big](z^k\!-\!x^*)dsdt\Big\rangle.\\ 
    &\qquad +\Big\langle z^k\!-\!x^*,\!\int_0^1\!\!\big[\nabla^2\!f(x^*)\!-\!\nabla^2\!f(x^*\!+\!t(z^k\!-\!x^*))\big](z^k\!-\!x^*)dt\Big\rangle\\
    &\qquad+\langle\nabla\!f(z^k),z^k\!-\!x^*\rangle\\
    &\stackrel{\eqref{Hessf-Lip}}{\le}(2L_H/3)\|z^k-x^*\|^3+\langle\nabla\!f(z^k),z^k-x^*\rangle.
    \end{align*}
 Adding this inequality to the above \eqref{gfun-ineq} and using inequality \eqref{zetak-bound} leads to   
\begin{align*}
 F(z^k)-F(x^*)
  &\le({2L_H}/{3})\|z^k\!-\!x^{*}\|^3+\langle\nabla\!f(z^k)+v^{k}\!-\!\nabla f_{k,j_k}(y^k),z^k\!-\!x^{*}\rangle\nonumber\\
  &\stackrel{\eqref{zetak-bound} }{\le}
  \frac{2L_H}{3}\|z^k\!-\!x^{*}\|^3+\frac{\beta}{\sqrt{2}}\|y^k\!-\!x^k\|^{q-1}\|z^k\!-\!x^{*}\|.
 \end{align*} 
 The second desired inequality then follows. The proof is completed.
 \end{proof}

 Now we bound $\|y^k\!-\!x^k\|$ with ${\rm dist}(x^k,\mathcal{X}^*)$ by the first inequality of Lemma \ref{lemma-relation}.
\begin{proposition}\label{prop-dkbound1}
 Fix any $q\in\!(2,3]$ and $\overline{x}\in\!\mathcal{X}^*$. Let $0<\!\varrho<\!\frac{1-\varsigma}{1+\varsigma+L_g}$ for some $\varsigma\in(0,1)$, and $\eta\!:=\!\frac{2+L_HL_{m}^{-1}}{2\eta_0}\!+\!\frac{\sqrt{2L_H(\eta_0L_{m})^{-1}+\eta_0^{-2}(2+L_HL_{m}^{-1})^2}}{2}$ for $\eta_0\!=1-\!\varsigma\!-\!\varrho(1\!+\!\varsigma\!+\!L_g)$. Under Assumptions \ref{ass1}-\ref{ass2}, 
 there is $\widetilde{k}\in\!\mathbb{N}$ such that for all $k\ge\widetilde{k}$ with $x^k\!\in\mathbb{B}(\overline{x},{\overline{\varepsilon}}/{2})$,
 \begin{equation}\label{ykxk-bound}
   \|y^k-x^k\|\le \eta{\rm dist}(x^k,\mathcal{X}^*).
 \end{equation}
\end{proposition}
\begin{proof}
 First of all, we claim that there exists $\overline{k}_1\in\!\mathbb{N}$ such that for all $k\ge\overline{k}_1$, 
 \begin{equation}\label{ykxk-relation}
  (1\!-\!\varsigma)\|y^k-x^k\|\le\|y^k-v^k-x^k\|\le(1\!+\!\varsigma)\|y^k-x^k\|.
 \end{equation}
 Recall that $\lim_{k\to\infty}(y^k\!-\!x^k)=0$ and $\{L_k\}_{k\in\mathbb{N}}$ is bounded. As $q>2$, there exists $\overline{k}_1\in\mathbb{N}$ such that for all $k\ge\overline{k}_1$, $\varrho L_k\|y^k\!-\!x^k\|^{q-2}\le\varsigma$. Along with condition \eqref{inexact-cond}, we have $\|v^k\|\le\varsigma\|y^k\!-\!x^k\|$ for all $k\ge\overline{k}_1$, which implies the inequalities in  \eqref{ykxk-relation}.
  
  Let $\widetilde{k}:=\max\{\overline{k},\overline{k}_1\}$. Fix any $k\ge \widetilde{k}$ with $x^k\in\mathbb{B}(\overline{x},\overline{\varepsilon}/2)$. From \eqref{ineq-Gamma}, there exists $\overline{x}^*\in\omega(x^0)$ such that $x^k\in\mathbb{B}(\overline{x}^*,\overline{\varepsilon}/2)\subset\Gamma$. Pick any $x^{k,*}\in\Pi_{\mathcal{X}^*}(x^k)$. Since $\|x^k-\overline{x}\|\ge{\rm dist}(x^k,\mathcal{X}^*)\ge\|x^{k,*}-\overline{x}^*\|-\|x^k-\overline{x}^*\|$, we have $\|x^{k,*}\!-\!\overline{x}^*\|\le\|x^k\!-\!\overline{x}\|+\|x^k\!-\!\overline{x}^*\|\le\overline{\varepsilon}$, i.e., $x^{k,*}\in\mathbb{B}(\overline{x}^*,\overline{\varepsilon})\subset\Gamma$. Note that $\|y^k\!-\!v^k\!-\!x^{k,*}\|\neq0$ (if not, by \eqref{ykxk-relation}, the result holds for $\eta=\frac{1}{1-\varsigma}$). Using the first inequality of Lemma \ref{lemma-relation} with $x^*\!=x^{k,*}$ leads to 
 \begin{align*}
   &L_k\|y^k\!-\!x^k\|^{q-2}\|y^k\!-\!v^k\!-\!x^{k,*}\|\\
   &\le (L_H/2)\|x^k\!-\!x^{k,*}\|^{2}+(L_H\!+\!L_k\|y^k\!-\!x^k\|^{q-3})\|y^k\!-\!x^k\|\|x^{k,*}\!-\!x^k\|\\
   &\quad+\varrho L_k\big[1\!+\!L_k\|y^k\!-\!x^k\|^{q-2}\!+\!\|\nabla^2\!f(x^{k,*})\|\big]\|y^k\!-\!x^k\|^{q-1}
  \end{align*} 
  where $\|v^k\|\le \varrho L_k\|y^k\!-\!x^k\|^{q-1}$ is also used.
  While from the above \eqref{ykxk-relation}, it follows that $\|y^k\!-\!v^k-x^{k,*}\|\ge(1-\varsigma)\|y^k\!-\!x^k\|-\|x^k\!-\!x^{k,*}\|$, and consequently,  
  \[
   L_k\|y^k\!-\!x^k\|^{q-2}\|y^k\!-\!v^k\!-\!x^{k,*}\|\ge L_k(1\!-\!\varsigma)\|y^k\!-\!x^k\|^{q-1}\!-\!L_k\|y^k\!-\!x^k\|^{q-2}\|x^{k,*}\!-\!x^k\|.
  \]
  From the above two inequalities, it immediately follows that 
  \begin{align*}
   (1\!-\!\varsigma)L_k\|y^k\!-\!x^k\|^{q-1}
   &\!\le\!\frac{L_H}{2}\|x^{k,*}\!-\!x^k\|^{2}\!+\!\big(L_H\!+\!2L_k\|y^k\!-\!x^k\|^{q-3}\big)\|y^k\!-\!x^k\|\|x^{k,*}\!-\!x^k\|\\
   &\quad\!+\!\varrho L_k \big[1\!+\!L_k\|y^k\!-\!x^k\|^{q-2}\!+\!\|\nabla^2\!f(x^{k,*})\|\big]\|y^k\!-\!x^k\|^{q-1}.
  \end{align*}
  From $x^{k,*}\in\Gamma$ and \eqref{grad-Lip}, $\|\nabla^2\!f(x^{k,*})\|\le L_g$. Recall that $\{L_k\}_{k\in\mathbb{N}}$ is bounded and $\lim_{k\to\infty}\|y^k\!-\!x^k\|=0$. If necessary by increasing $\widetilde{k}$, we have $L_k\|y^k\!-\!x^k\|^{q-2}\le\varsigma$ and $1\!+\!L_k\|y^k\!-\!x^k\|^{q-2}\!+\!L_g\le1\!+\!\varsigma\!+\!L_g$. By the definition of $\eta_0$ and the above inequality,
  \begin{align}\label{temp-ineqeta}
   \eta_0L_k\|y^k\!-\!x^k\|^{q-1}
   &\le ({L_H}/{2})\|x^{k,*}\!-\!x^k\|^{2}+L_H\|y^k\!-\!x^k\|\|x^{k,*}\!-\!x^k\|\nonumber\\
   &\quad\ \!+\!2L_k\|y^k\!-\!x^k\|^{q-2}\|x^k-x^{k,*}\|.
  \end{align}
 Multiplying \eqref{temp-ineqeta} with $(\eta_0L_k)^{-1}\|y^k\!-\!x^k\|^{3-q}$ and using 
  $\|y^k\!-\!x^k\|^{4-q}\le\|y^k\!-\!x^k\|$ and $\|y^k\!-\!x^k\|^{3-q}\le1$ (implied by $q\in(2,3]$ and $\|y^k\!-\!x^k\|\le1$ as $k\ge\widetilde{k}\ge\overline{k}$), we have 
  \begin{align*}
   \|y^k\!-\!x^k\|^{2}
   &\le L_H(2\eta_0L_k)^{-1}\|y^k\!-\!x^k\|^{3-q}{\rm dist}(x^k,\mathcal{X}^*)^{2}\\
   &\quad\!+\!2\eta_0^{-1}\|y^k\!-\!x^k\|{\rm dist}(x^k,\mathcal{X}^*)
		\!+\!L_H(\eta_0L_k)^{-1}\|y^k\!-\!x^k\|^{4-q}{\rm dist}(x^k,\mathcal{X}^*)\\
   &\le L_H(2\eta_0L_{m})^{-1}{\rm dist}(x^k,\mathcal{X}^*)^{2}\!+\!
		\eta_0^{-1}(2\!+\!L_HL_{m}^{-1}){\rm dist}(x^k,\mathcal{X}^*)\|y^k\!-\!x^k\|,
 \end{align*}
 where the second inequality is due to $L_k\ge L_{m}$. The above 
 quadratic inequality on $\|y^k\!-\!x^k\|$ implies that $\|y^k\!-\!x^k\|\le \eta{\rm dist}(x^k,\mathcal{X}^*)$. The proof is completed.
\end{proof}

 Next we use the relation in \eqref{ykxk-bound} to establish the superlinear rate of the residual sequence $\{r(x^k)\}_{k\in\mathbb{N}}$ and the iterate sequence $\{x^k\}_{k\in\mathbb{N}}$. Such a relation was used in the local convergence rate analysis of inexact proximal Newton-type methods \cite{Yue19,Mordu23,LiuPanWY22}. It is worth emphasizing that our proof is not a direct extension of theirs because the direction $y^k-x^k$ in the aforementioned works is obtained by minimizing a strongly convex function. When $g\equiv 0$ and the subproblems are solved exactly, such a result was achieved in \cite[Lemma 1]{Yue19cubic}, but its analysis is inapplicable to our algorithm. 
\begin{theorem}\label{Theorem-Lconverge}
 Fix any $q\in(2,3]$ and $\overline{x}\in\!\omega(x^0)$. Let  $0<\varrho<\!\frac{1-\varsigma}{1+\varsigma+L_g}$ for some $\varsigma\in(0,1)$. Suppose that Assumptions \ref{ass1}-\ref{ass2} hold, and that there exist $\widehat{\varepsilon}>0$ and $\kappa>0$ such that for all $x\in\mathbb{B}(\overline{x},\widehat{\varepsilon})$, ${\rm dist}(x,\mathcal{X}^*)\le\kappa[r(x)]^{\gamma}$ with $\gamma\in(\frac{1}{q-1},1]$. 
  Then, with $\widehat{c}\!:=\![(3+\!L_g)\varrho \widehat{L}\!+\!L_H/2\!+\!\widehat{L}]\eta^{q-1}$ where $\eta$ is the same as before, for all $k$ large enough, 
 \begin{equation}\label{aim-rineq}
 r(x^{k+1})\le \kappa\widehat{c}[r(x^k)]^{\gamma(q-1)}\ \ {\rm and}\ \ 
 \|x^{k+1}\!-\!\overline{x}\|
 \le2\eta\kappa(1\!+\!\varsigma)\widehat{c}^{\gamma}\|x^k\!-\!\overline{x}\|^{\gamma(q-1)},
 \end{equation}
 that is, the sequences $\{r(x^k)\}_{k\in\mathbb{N}}$ and $\{x^k\}_{k\in\mathbb{N}}$ converge to $0$ and $\overline{x}\in\mathcal{X}^*$, respectively, with the $Q$-superlinear rate of order $\gamma(q\!-\!1)$.
\end{theorem}
\begin{proof}
 From $\overline{x}\in\omega(x^0)$, there exists $\mathcal{K}\subset\mathbb{N}$ such that $\lim_{\mathcal{K}\ni k\to\infty}x^k=\overline{x}$. Along with the given assumption, ${\rm dist}(x^k,\mathcal{X}^*)\le\kappa[r(x^k)]^{\gamma}$ for all $k\in\mathcal{K}$ large enough. Passing the limit $\mathcal{K}\ni k\to\infty$ and using Proposition \ref{prop2-xk} (ii) leads to $\overline{x}\in\mathcal{X}^*$. 
 For each $k\ge\widetilde{k}$ where $\widetilde{k}$ is the same as in Proposition \ref{prop-dkbound1}, since $x^k\!\in\Gamma$ and $\overline{x}\in\omega(x^0)\!\subset\Gamma$, invoking the nonexpansiveness of ${\rm prox}_g$ and equation \eqref{grad-Lip} results in 
 \begin{equation}\label{rxk-ineq41}
  r(x^k)=|r(x^k)-r(\overline{x})|\le 2\|x^k-\overline{x}\|+\|\nabla\!f(x^k)-\nabla\!f(\overline{x})\|\le (2\!+\!L_g)\|x^k\!-\!\overline{x}\|.
 \end{equation} 
 Let $\varepsilon_1\!:=\min\{\widehat{\varepsilon},\overline{\varepsilon}\}/2$. From \eqref{new-iter} and \eqref{ykxk-relation}, for all $k\ge \widetilde{k}$, it holds that 
 \begin{equation}\label{xkp1-ineq41}
 (1\!-\!\varsigma)\|y^k\!-\!x^k\|\le\|x^{k+1}\!-\!x^k\|\le(1\!+\!\varsigma)\|y^k\!-\!x^k\|.
 \end{equation}
 Along with $\lim_{k\to\infty}(y^k\!-\!x^k)=0$, we get $\lim_{k\to\infty}(x^{k+1}\!-\!x^k)=0$, so $\|x^{k+1}\!-\!x^k\|\le\varepsilon_1$ for all $k\ge\widetilde{k}$ (if necessary by increasing $\widetilde{k}$). Thus, for all $k\ge \widetilde{k}$ with $x^k\in\mathbb{B}(\overline{x},\varepsilon_1)$, $\|x^{k+1}-\overline{x}\|\le2\varepsilon_1\le\min\{\widehat{\varepsilon},\overline{\varepsilon}\}$. Fix any $k\ge \widetilde{k}$ with $x^k\in\mathbb{B}(\overline{x},\varepsilon_1)$. Then,
 \begin{align}\label{ineq0-rk}
  r(x^{k+1})
  &\stackrel{ \eqref{yk-vk}}{=}\|x^{k+1}-{\rm prox}_g(x^{k+1}\!-\!\nabla\!f(x^{k+1}))+v^k-y^k+{\rm prox}_g(y^k\!-\!\nabla\!f_{k,j_k}(y^k))\|\nonumber\\
  &\le \|x^{k+1}-y^k-\nabla\!f(x^{k+1})+\nabla\!f_{k,j_k}(y^k)\|+\|x^{k+1}-y^k\|+\|v^k\|\nonumber\\
  &\le \|\nabla\!f(x^{k+1})-\nabla\vartheta_{k}(y^k)\|+2\|x^{k+1}-y^k\|+\|v_k\|+L_k\|y^k-x^k\|^{q-1}\nonumber\\
  &\stackrel{ \eqref{new-iter}}{\le}3\|v^k\|+\|\nabla\!f(x^{k+1})-\nabla\vartheta_{k}(y^k)\|+L_k\|y^k-x^k\|^{q-1},
  \end{align}
  where the first inequality is using the nonexpansiveness of ${\rm prox}_g$, and the second one is by the expression of $f_{k,j_k}$ in \eqref{subprobk}. By invoking \eqref{grad-Lip} and \eqref{ineq-quad}, we have
  \[
   \|\nabla\!f(x^{k+1})\!-\!\nabla\vartheta_k(y^k)\|
   \le L_g\|v^k\|+({L_H}/{2})\|y^k\!-\!x^k\|^{2}.
  \]
  Combining this inequality with \eqref{ineq0-rk} and using $\|v^k\|\le\varrho L_k\|y^k\!-\!x^k\|^{q-1}$ leads to  
  \begin{align}\label{rk-rate}
   r(x^{k+1})
   &\le (3+\!L_g)\varrho L_k\|y^k\!-\!x^k\|^{q-1}+({L_H}/{2})\|y^k\!-\!x^k\|^{2}+L_k\|y^k\!-\!x^k\|^{q-1}\nonumber\\
   &\le(3+\!L_g)\varrho \widehat{L}\|y^k\!-\!x^k\|^{q-1}+({L_H}/{2})\|y^k\!-\!x^k\|^{q-1}+\widehat{L}\|y^k\!-\!x^k\|^{q-1}\nonumber\\
   &\stackrel{\eqref{ykxk-bound}}{\le}[(3+\!L_g)\varrho \widehat{L}+L_H/2+\widehat{L}]\eta^{q-1}{\rm dist}(x^k,\mathcal{X}^*)^{q-1}\nonumber\\
   &\le[(3+\!L_g)\varrho \widehat{L}+L_H/2+\widehat{L}]\eta^{q-1}\kappa[r(x^k)]^{\gamma(q-1)},
  \end{align}
  where the second inequality is due to $\|y^k\!-\!x^k\|\le 1$ and the boundedness of $L_k$,  and the fourth one is by the given assumption. Fix  any $\sigma\in(0,1)$. Since $\lim_{k\to\infty}r(x^k)=0$ and $\gamma(q\!-\!1)>1$, from the above \eqref{rk-rate}, there exists $\varepsilon_2\in(0,\varepsilon_1)$ such that 
  \begin{equation}\label{rk-recursion}
   r(x^{k+1})\le\sigma r(x^k)\quad{\rm for\ all}\ k\ge \widetilde{k}\ {\rm with}\ x^k\in\mathbb{B}(\overline{x},\varepsilon_2).
  \end{equation}
  Let $\widetilde{\varepsilon}\!:=\min\{\frac{\varepsilon_2}{(1+\varsigma)\eta+1},\big(\frac{(1-\widetilde{\sigma})\varepsilon_2}{2(1+\varsigma)\eta\kappa(2+L_g)^{\gamma}}\big)^{1/\gamma},\frac{\varepsilon_2}{2}\}$ and $\widetilde{\sigma}:=\sigma^{\gamma}\in(0,1)$. 
  Recall that $\overline{x}\in\omega(x^0)$, there exists $\overline{k}_2\ge \widetilde{k}$ such that $x^{\overline{k}_2}\in\mathbb{B}(\overline{x},\widetilde{\varepsilon})$. We argue by induction that 
  \begin{equation}\label{induction}
  x^{k+1}\in\mathbb{B}(\overline{x},\varepsilon_2)\quad{\rm for\ all}\  k\ge \overline{k}_2. 
  \end{equation}
  Indeed, when $k=\overline{k}_2$, the conclusion holds by invoking $x^{\overline{k}_2}\in\mathbb{B}(\overline{x},\widetilde{\varepsilon})$ and noting that
  \begin{align*}
  \|x^{\overline{k}_2+1}-\overline{x}\|
  &\le\|x^{\overline{k}_2+1}-x^{\overline{k}_2}\|+\|x^{\overline{k}_2}-\overline{x}\|\stackrel{\eqref{xkp1-ineq41}}{\le} (1\!+\!\varsigma)\|y^{\overline{k}_2}\!-\!x^{\overline{k}_2}\|+\|x^{\overline{k}_2}\!-\!\overline{x}\|\\
  &\stackrel{\eqref{ykxk-bound}}{\le} (1\!+\!\varsigma)\eta{\rm dist}(x^{\overline{k}_2},\mathcal{X}^*)+\|x^{\overline{k}_2}-\overline{x}\|\le\big((1\!+\!\varsigma)\eta\!+\!1\big)\|x^{\overline{k}_2}\!-\!\overline{x}\|\le\varepsilon_2,  
 \end{align*}
 where the fourth inequality is due to ${\rm dist}(x^{\overline{k}_2},\mathcal{X}^*)\le\|x^{\overline{k}_2}-\overline{x}\|\le\widetilde{\varepsilon}\le1$. To proceed the induction, fix any $k>\overline{k}_2$ and assume that $x^{l+1}\in\mathbb{B}(\overline{x},\varepsilon_2)$ for all $\overline{k}_2\le l\le k\!-\!1$. By applying the given local error bound assumption, we can obtain that  
 \begin{align*}
  \|x^{k+1}\!-\!x^{\overline{k}_2}\|
  &\le{\textstyle\sum_{l=\overline{k}_2}^{k}}\|x^{l+1}-x^l\|
  \stackrel{\eqref{xkp1-ineq41}}{\le} (1\!+\!\varsigma){\textstyle\sum_{l=\overline{k}_2}^{k}}\|y^l\!-\!x^l\|\\
  &\stackrel{\eqref{ykxk-bound}}{\le} \eta(1\!+\!\varsigma){\textstyle\sum_{l=\overline{k}_2}^{k}}{\rm dist}(x^l,\mathcal{X}^*)
  \le \eta\kappa(1\!+\!\varsigma){\textstyle\sum_{l=\overline{k}_2}^{k}}[r(x^l)]^{\gamma}\\ 
  &\stackrel{\eqref{rk-recursion}}{\le} \eta\kappa(1\!+\!\varsigma)[r(x^{\overline{k}_2})]^{\gamma}{\textstyle\sum_{l=\overline{k}_2}^{k}}\widetilde{\sigma}^{l-\overline{k}_2}
  \le\frac{\eta\kappa(1\!+\!\varsigma)}{1-\widetilde{\sigma}}[r(x^{\overline{k}_2})]^{\gamma}\\
  &\stackrel{\eqref{rxk-ineq41}}{\le}\frac{\eta\kappa(1\!+\!\varsigma)}{1-\widetilde{\sigma}}(2\!+\!L_g)^{\gamma}\|x^{\overline{k}_2}-\overline{x}\|^{\gamma}\le\frac{\varepsilon_2}{2}.
  \end{align*}
  This means that $\|x^{k+1}-\overline{x}\|\le \varepsilon_2/2+\|x^{\overline{k}_2}-\overline{x}\|\le\varepsilon_2$. 
   
  Now we are in a position to prove the convergence result. As $\lim_{k\to\infty}r(x^k)=0$, for any $\varepsilon>0$, there exists $\widehat{k}\ge \overline{k}_2$ such that $r(x^k)\le\varepsilon$ for all $k\ge\widehat{k}$. Fix any $i_1>i_2\ge\widehat{k}$. Using the same arguments as those for the above inequality yields that
  \begin{align*}
   \|x^{i_1}-x^{i_2}\|\le\sum_{l=i_2}^{i_1-1}\|x^{l+1}-x^l\|
    \le\frac{(1+\varsigma)\eta\kappa}{1-\widetilde{\sigma}}[r(x^{i_2})]^{\gamma}
   \le\frac{(1+\varsigma)\eta\kappa}{1-\widetilde{\sigma}}\varepsilon^{\gamma},
  \end{align*}
  which implies that $\{x^k\}_{k\in\mathbb{N}}$ is a Cauchy sequence, so converges to $\overline{x}\in\mathcal{X}^*$. Next we deduce the convergence rate of $\{r(x^k)\}_{k\in\mathbb{N}}$ and $\{x^k\}_{k\in\mathbb{N}}$. Fix any $k>\widehat{k}$. Then $x^k\in\mathbb{B}(\overline{x},\varepsilon_2)$ follows \eqref{induction}, so \eqref{rk-rate} holds for any $k>\widehat{k}$. That is, the first inequality in \eqref{aim-rineq} holds. From the local error bound and the third inequality in \eqref{rk-rate},  
  \begin{equation}\label{dist-rate}
  {\rm dist}(x^{k+1},\mathcal{X}^*)\le\kappa[(3+\!L_g)\varrho \widehat{L}\!+\!{L_H}/{2}\!+\!\widehat{L}]^{\gamma}\eta^{\gamma(q-1)}{\rm dist}(x^k,\mathcal{X}^*)^{\gamma(q-1)}.
 \end{equation}
 Note that $\lim_{k\to\infty}{\rm dist}(x^k,\mathcal{X}^*)=0$ by the local error bound and $\lim_{k\to\infty}r(x^k)=0$. Inequality \eqref{dist-rate} implies that for any $k>\widehat{k}$, ${\rm dist}(x^{k+1},\mathcal{X}^*)\le\frac{1}{2}{\rm dist}(x^k,\mathcal{X}^*)$. Then, 
  \begin{align}\label{ineq-xk}
   \|x^{i_1}\!-\!x^{i_2}\|
   &\le{\textstyle\sum_{k=i_2}^{i_1-1}}\|x^{k+1}\!-\!x^k\|\stackrel{\eqref{xkp1-ineq41}}{\le}
    (1\!+\!\varsigma){\textstyle\sum_{k=i_2}^{i_1-1}}\|y^k\!-\!x^k\|\nonumber\\
   &\le(1\!+\!\varsigma)\eta{\textstyle\sum_{k=i_2}^{i_1-1}}{\rm dist}(x^k,\mathcal{X}^*)\nonumber\\
   &\stackrel{\eqref{ykxk-bound}}{\le}(1\!+\!\varsigma)\eta{\rm dist}(x^{i_2},\mathcal{X}^*){\textstyle\sum_{k=i_2}^{i_1-1}}(1/2)^{k-i_2}\nonumber\\
   &\le 2(1\!+\!\varsigma)\eta{\rm dist}(x^{i_2},\mathcal{X}^*)\quad\ {\rm for\ any}\ i_1>i_2>\widehat{k}.
  \end{align}
 Letting $i_2=k+1$ and passing the limit $i_1\to\infty$ to the both sides of \eqref{ineq-xk} results in
 \begin{align*}
 \|x^{k+1}\!-\!\overline{x}\|
 &\le 2(1\!+\!\varsigma)\eta{\rm dist}(x^{k+1},\mathcal{X}^*)\\
 &\stackrel{\eqref{dist-rate}}{\le}2(1\!+\!\varsigma)\eta\kappa[(3\!+\!L_g)\varrho \widehat{L}\!+\!L_H/2\!+\!\widehat{L}]^{\gamma}\eta^{\gamma(q-1)}{\rm dist}(x^k,\mathcal{X}^*)^{\gamma(q-1)}\\
 &\le2\eta\kappa(1\!+\!\varsigma)[(3\!+\!L_g)\varrho \widehat{L}+L_H/2+\widehat{L}]^{\gamma}\eta^{\gamma(q-1)} \|x^k\!-\!\overline{x}\|^{\gamma(q-1)}.
 \end{align*} 
 This shows that the second inequality in \eqref{aim-rineq} holds. The proof is completed.
 \end{proof}
\begin{remark}\label{remark-conv}
 Theorem \ref{Theorem-Lconverge} states that the sequence $\{x^k\}_{k\in\mathbb{N}}$ converges to a second-order stationary point with $Q$-superlinear rate of order $\gamma(q\!-\!1)$, which is specified as $2$ for $q=3$ and $\gamma\!=1$, under the strict continuity of $\nabla^2\!f$ on an open convex set $\mathcal{N}$ with $\mathcal{O}\supset\mathcal{N}\supset\omega(x^0)$ and the local H\"{o}lderian error bound of order $\gamma\in(\frac{1}{q-1},1]$ on $\mathcal{X}^*$. To the best of our knowledge, even for nonsmooth convex composite optimization, this is the first quadratic convergence rate of the iterate sequence for CR methods under a local Lipschitzian error bound, which is equivalent to the KL property of $F$ with exponent $1/2$ by \cite[Proposition 2 (i)]{LiuPanWY22} for convex composite optimization. 
\end{remark} 

\begin{theorem}\label{obj-superlinear}
 Fix any $q\in(2,3]$ and $\overline{x}\in\omega(x^0)$. Suppose that $F$ is a KL function, that the assumptions of Theorem \ref{Theorem-Lconverge} hold at $\overline{x}$ with $\gamma\in(\frac{1}{q-1},1]$, and that there exist $\widetilde{\varepsilon}\in(0,\widehat{\varepsilon})$ and $\widehat{\kappa}\in(0,\kappa^{-\frac{1}{\gamma}})$ such that $F(x)\ge F(\overline{x})-\frac{\gamma\widehat{\kappa}}{1+\gamma}[{\rm dist}(x,\mathcal{S}^*)]^{\frac{1+\gamma}{\gamma}}$ for all $x\in\mathbb{B}(\overline{x},\widetilde{\varepsilon})$.
 Then, for all sufficiently large $k$, the following inequality holds
\[
 F(x^{k+1})-\overline{F}\le\Big(\frac{2L_H}{3}\!+\!\frac{\beta}{\sqrt{2}}\Big)\eta^{q-1}\max\big\{(\varrho \widehat{L}\eta^{q-1}\!+\!\kappa\widehat{c}^{\gamma})^3,1\big\}[\beta_0(F(x^k)-\overline{F})]^{\gamma(q-1)},
\]
where $\beta_0>0$ is a constant and the constants $\eta$ and $\beta$ are the same as before. That is, the sequence $\{F(x^k)\}_{k\in\mathbb{N}}$ converges to $\overline{F}$ with $Q$-superlinear rate of order $\gamma(q\!-\!1)$.
\end{theorem}
 \begin{proof}
 From \cite[Remark 1 (b)]{LiuPanWY22}, $F(z^*)\le F(\overline{x})=\overline{F}$ for all $z^*\in\mathcal{S}\cap\mathbb{B}(\overline{x},\widetilde{\varepsilon})$ (if necessary by shrinking $\widetilde{\varepsilon}$).
 As $\mathcal{X}^*\subset\mathcal{S}^*$, the local error bound condition implies that ${\rm dist}(x,\mathcal{S}^*)\le\kappa[r(x)]^{\gamma}$ for all $x\in\mathbb{B}(\overline{x},\widetilde{\varepsilon})$, which by \cite[Lemma 1]{LiuPanWY22} is equivalent to the $\gamma$-subregularity of $\partial F$ at $(\overline{x},0)$ with modulus $\kappa$. Together with the given assumption on $F$ and \cite[Theorem 3.4]{Mordu15}, there exists $\beta_0>0$ such that for all $x\in\mathbb{B}(\overline{x},\widetilde{\varepsilon})$,
 \begin{equation}\label{growth}
  \beta_0(F(x)-\overline{F})\ge {\rm dist}(x,\mathcal{S}^*)^{\frac{1+\gamma}{\gamma}}.
 \end{equation}
 In addition, from the local error bound condition on $\mathcal{X}^*$, it is easy to argue by contradiction that $\mathcal{S}^*\cap\mathbb{B}(\overline{x},\widetilde{\varepsilon})=\mathcal{X}^*\cap\mathbb{B}(\overline{x},\widetilde{\varepsilon})$. This implies that 
 \begin{equation}\label{ineq-dist}
 {\rm dist}(x,\mathcal{X}^*)={\rm dist}(x,\mathcal{S}^*)\quad\ \forall x\in\mathbb{B}(\overline{x},\widehat{\varepsilon}/2).
 \end{equation}
 Indeed, fix any $x\in\mathbb{B}(\overline{x},\widetilde{\varepsilon}/2)$ and pick $x^*\in\Pi_{\mathcal{S}^*}(x)$, we have $\|x^*-\overline{x}\|\le 2\|x-\overline{x}\|\le\widetilde{\varepsilon}$. Then, $x^*\in\mathcal{S}^*\cap\mathbb{B}(\overline{x},\widetilde{\varepsilon})=\mathcal{X}^*\cap\mathbb{B}(\overline{x},\widetilde{\varepsilon})$, so ${\rm dist}(x,\mathcal{S}^*)\le{\rm dist}(x,\mathcal{X}^*)\le\|x-x^*\|={\rm dist}(x,\mathcal{S}^*)$. 
 From the proof of Theorem \ref{globalconv}, we have $\lim_{k\to\infty}w^k=\overline{w}$. 
 For each $k\in\mathbb{N}$, write $z^k:=y^k-v^k$ and $x^{k+1,*}:=\Pi_{\mathcal{X}^*}(x^{k+1})$. From the proof of Proposition \ref{prop-dkbound1}, $x^{k+1,*}\in\Gamma$ for all $k$ large enough. Then, for all sufficiently large $k$, it holds that 
 \begin{align}\label{z^k-bound}
 \|z^k-x^{k+1,*}\|&\le\|z^k-x^{k+1}\|+\|x^{k+1}-x^{k+1,*}\|
 \stackrel{\eqref{new-iter}}{\le}\|v^k\|+{\rm dist}(x^{k+1},\mathcal{X}^*)\nonumber\\
 &\stackrel{\eqref{inexact-cond},\eqref{dist-rate}}{\le}\varrho L_k\|y^k-x^k\|^{q-1}+\kappa\widehat{c}^{\gamma}{\rm dist}(x^{k},\mathcal{X}^*)^{\gamma(q-1)}\nonumber\\
 &\stackrel{\eqref{ykxk-bound}}{\le}(\varrho L_k\eta^{q-1}\!+\kappa\widehat{c}^{\gamma})\,{\rm dist}(x^{k},\mathcal{X}^*)^{\gamma(q-1)}\nonumber\\
 &\le(\varrho\widehat{L}\eta^{q-1}\!+\kappa\widehat{c}^{\gamma})\,{\rm dist}(x^{k},\mathcal{X}^*)^{\gamma(q-1)},
 \end{align} 
 where the last inequality is due to Proposition \ref{prop2-xk} (i).
 As $\lim_{k\to\infty}{\rm dist}(x^{k+1},\mathcal{X}^*)=0$, we have $x^{k+1,*}\in\mathbb{B}(\overline{x},\widetilde{\varepsilon})$, and then $F(x^{k+1,*})\le\overline{F}$ for all $k$ large enough.
 Together with \eqref{Phi-fun}, for all $k$ large enough,
 $\Phi(w^k)\!-\!\overline{F}\le F(y^k\!-\!v^k)-F(x^{k+1,*})$. Invoking the second inequality of Lemma \ref{lemma-relation} with $x^*=x^{k+1,*}$ and \eqref{z^k-bound}, for all $k$ large enough, 
 \begin{align*}
  F(x^{k+1})-\overline{F}&\stackrel{\eqref{new-iter}}{\le}\Phi(w^k)-\overline{F}\le F(y^k\!-\!v^k)-F(x^{k+1,*})\\
  &\le (2L_H/3)\|z^k\!-\!x^{k+1,*}\|^3+({\beta}/{\sqrt{2}})\|y^k\!-\!x^k\|^{q-1}\|z^k\!-\!x^{k+1,*}\|\\
  &\stackrel{\eqref{ykxk-bound}}{\le}(2L_H/3)\|z^k\!-\!x^{k+1,*}\|^3+({\beta}/{\sqrt{2}})\eta^{q-1}{\rm dist}(x^{k},\mathcal{X}^*)^{q-1}\|z^k\!-\!x^{k+1,*}\|\\
  &\stackrel{\eqref{z^k-bound}}{\le}(2L_H/3)(\varrho \widehat{L}\eta^{q-1}\!+\kappa\widehat{c}^{\gamma})^3{\rm dist}(x^{k},\mathcal{X}^*)^{3\gamma(q-1)}\nonumber\\
  &\qquad+({\beta}/{\sqrt{2}})\eta^{q-1}(\varrho \widehat{L}\eta^{q-1}\!+\kappa\widehat{c}^{\gamma}){\rm dist}(x^{k},\mathcal{X}^*)^{(1+\gamma)(q-1)}\\
  &\le\Big(\frac{2L_H}{3}\!+\!\frac{\beta}{\sqrt{2}}\Big)\eta^{q-1}\max\big\{(\varrho \widehat{L}\eta^{q-1}\!+\!\kappa\widehat{c}^{\gamma})^3,1\big\}{\rm dist}(x^{k},\mathcal{X}^*)^{(1+\gamma)(q-1)}\\
  &\stackrel{\eqref{ineq-dist}}{=}\Big(\frac{2L_H}{3}\!+\!\frac{\beta}{\sqrt{2}}\Big)\eta^{q-1}\max\big\{(\varrho \widehat{L}\eta^{q-1}\!+\!\kappa\widehat{c}^{\gamma})^3,1\big\}{\rm dist}(x^k,\mathcal{S}^*)^{(1+\gamma)(q-1)}\\
  &\stackrel{\eqref{growth}}{\le}\Big(\frac{2L_H}{3}\!+\!\frac{\beta}{\sqrt{2}}\Big)\eta^{q-1}\max\big\{(\varrho \widehat{L}\eta^{q-1}\!+\!\kappa\widehat{c}^{\gamma})^3,1\big\}[\beta_0(F(x^k)-\overline{F})]^{\gamma(q-1)}.
 \end{align*} 
 This shows that the desired inequality holds. The proof is completed.
 \end{proof}
 \begin{remark}\label{remark-objrate}
 {\bf(a)} Theorem \ref{obj-superlinear} shows that the objective value sequence has the same local convergence rate as the iterate sequence, but it requires the local H\"{o}lderian error bound to hold at $\overline{x}\in\omega(x^0)$ for which $F$ satisfies the lower second-order growth. This condition automatically holds when $\overline{x}$ is locally optimal.

 \noindent
 {\bf(b)} Theorem \ref{obj-superlinear} with $q\!=3$ and $\gamma\!=1$ implies that for convex composite optimization problem \eqref{prob}, the objective value sequence $\{F(x^k)\}_{k\in\mathbb{N}}$ of our inexact CR method converges to $\overline{F}$ with $Q$-quadratic rate under a local Lipschitzian error bound. This recovers the conclusion of \cite[Theorem 1]{Doikov22} for $p=2$ by replacing the strong convexity assumption there on $F$ with a much weaker local error bound.
 \end{remark}

 \subsection{Complexity bounds}
 We derive a global complexity bound for Algorithm \ref{qregPNT} with $q\in[2,3]$ in terms of the first-order optimality conditions by replacing Assumption \ref{ass2} (ii) with the strict continuity of $\nabla^2\!f$ on a compact convex set $\Omega$ containing $\{(x^k,y^k)\}_{k\in\mathbb{N}}$. Note that $\Omega$ does not necessarily contain the compact convex set $\Gamma$.
 \begin{theorem}\label{complexity}
 Suppose that Assumptions \ref{ass1}-\ref{ass2} (i) holds, and that $\nabla^2\!f$ is strictly continuous on a compact convex set $\Omega$ containing $\{x^k\}_{k\in\mathbb{N}}\cup\{y^k\}_{k\in\mathbb{N}}$. 
 Then, for any given $\epsilon\in(0,1)$, $r(x^{K+1})\le\epsilon$ with $K\!:=\!\lfloor q(\sigma L_{m})^{-1}\min\{\widehat{\eta}^{\frac{q}{2}},\widehat{\eta}^{\frac{q}{q-1}}\}(F(x^0)\!-\!\overline{F})\epsilon^{-\frac{q}{q-1}}\rfloor$ for $\widehat{\eta}\!=\!(\varrho(3\!+\!L_g)\!+\!1)\widehat{L}\!+\!{L_H}/{2}$, where $L_H$ is the Lipschitz constant of $\nabla^2\!f$ on $\Omega$.
 \end{theorem}
 \begin{proof}
 Since $\nabla^2\!f(\cdot)$ is assumed to be strictly continuous on the compact convex set $\Omega$, it is Lipschitz continuous on $\Omega$ by \cite[Theorem 9.2]{RW98}. We first derive an upper bound of $r(x^{k+1})$ in terms of $\|y^k\!-\!x^k\|$. It is easy to check \eqref{ineq0-rk} holds for all $k\in\mathbb{N}$. From the Lipschitz continuity of  $\nabla^2\!f(\cdot)$ on $\Omega$ and $\{(x^{k},y^k)\}_{k\in\mathbb{N}}\subset\Omega$, inequality \eqref{ineq-quad} holds for all $k\in\mathbb{N}$. Then, for all $k\in\mathbb{N}$, it holds that
 \[
   \|\nabla\!f(x^{k+1})\!-\!\nabla\vartheta_k(y^k)\|\le L_g\|v^k\|+(L_H/2)\|y^k\!-\!x^k\|^2.
 \]
 By combining this inequality with \eqref{ineq0-rk} and \eqref{inexact-cond}, for each $k\in\mathbb{N}$, it holds that 
 \begin{align}\label{ineq-complexity}
  r(x^{k+1})&\le 3\|v^k\|+ L_g\|v^k\|+(L_H/2)\|y^k\!-\!x^k\|^2+L_k\|y^k\!-\!x^k\|^{q-1}\nonumber\\
 &\le(\varrho(3\!+\!L_g)\!+\!1)\widehat{L}\|y^k\!-\!x^k\|^{q-1}+(L_H/2)\|y^k\!-\!x^k\|^2,
 \end{align}
 where the second inequality is due to  $L_k\le\widehat{L}$ by Proposition \ref{prop2-xk} (i). 
 As $r(x^{K+1})\le\epsilon$ and Algorithm \ref{qregPNT} stops at the $(K\!+\!1)$th iteration, we have $r(x^i)>\epsilon$ for $i=1,\ldots,K$. Fix any $i\in\{1,\ldots,K\}$. When $\|y^{i-1}\!-\!x^{i-1}\|>1$, combining the above \eqref{ineq-complexity} with $r(x^i)>\epsilon$ and the definition of $\widehat{\eta}$ leads to $\|y^{i-1}\!-\!x^{i-1}\|\ge \widehat{\eta}^{-1/2}\epsilon^{1/2}\ge \widehat{\eta}^{-1/2}\epsilon^{1/(q-1)}$. When $\|y^{i-1}\!-\!x^{i-1}\|\le1$, the above \eqref{ineq-complexity} along with  $r(x^i)>\epsilon$ and the definition of $\widehat{\eta}$ implies that $\|y^{i-1}\!-\!x^{i-1}\|\ge \widehat{\eta}^{-1/(q-1)}\epsilon^{1/(q-1)}$. Thus, $\|y^{i-1}\!-\!x^{i-1}\|\ge \min\{\widehat{\eta}^{-1/2},\widehat{\eta}^{-1/(q-1)}\}\epsilon^{1/(q-1)}$. Together with Proposition \ref{prop1-xk} (i), it follows that 
 \begin{equation*}
  F(x^{i-1})-F(x^{i})\ge\sigma L_{m}q^{-1}\|y^{i-1}\!-\!x^{i-1}\|^q\ge \sigma L_{m}q^{-1}\min\{\widehat{\eta}^{-q/2},\widehat{\eta}^{-q/(q-1)}\}\epsilon^{\frac{q}{q-1}}.
 \end{equation*}
  Recall that $F(x^K)$ is bounded below by $\overline{F}$ by Proposition \ref{prop1-xk} (i). Then, it holds that
  \[
    F(x^0)-\overline{F}\ge{\textstyle\sum_{i=1}^{K}}(F(x^{i-1})\!-\!F(x^{i}))\ge K\sigma L_{m}q^{-1}\min\{\widehat{\eta}^{-q/2},\widehat{\eta}^{-q/(q-1)}\}\epsilon^{\frac{q}{q-1}},
 \]
 which, after a suitable rearrangement, yields the desired result.
 \end{proof}

 When $g\equiv0$, the complexity bound of Theorem \ref{complexity} with $q=3$ matches the one obtained in \cite{Nesterov06} for the CR method and those obtained in \cite{Cartis11b} for its variants, namely, $O(\epsilon^{-\frac{3}{2}})$. When $g=\chi_{C}$ for a closed convex set $C\subset\mathbb{X}$, the complexity bound of Theorem \ref{complexity} also fits with the one established in \cite[Theorem 3.12]{Cartis18}.

 \section{Numerical experiments}\label{sec5}

 We test the performance of Algorithm \ref{qregPNT} by applying it to problem \eqref{prob} with highly nonlinear $f$ and common nonsmooth convex $g$. All tests are performed on a desktop running on Matlab R2020b and 64-bit Windows System with an Intel(R) Core(TM) i9-10850K CPU 3.60GHz and 32.0 GB RAM.    
\subsection{Implementation of Algorithm \ref{qregPNT}}\label{sec5.1}

 The core of Algorithm \ref{qregPNT} is the inexact computation of subproblem \eqref{subprobj}, the minimization of the sum of the $q$-order regularized quadratic approximation of $f$ at $x^k$ and the nonsmooth convex $g$. Based on Remark 3.3 in the archive version of this work, we choose ZeroFPR as a solver to subproblems. ZeroFPR is a line-search algorithm proposed in \cite{Themelis18} for the composite problem of form \eqref{prob} with a general nonsmooth $g$ by seeking a critical point of the forward-backward envelope of $F$ with the limited-memory BFGS (lbfgs). When calling ZeroFPR to solve \eqref{subprobj}, we adopt its default setting except that the inexactness criterion \eqref{inexact-cond} is used as the stopping condition and the maximum number of iterations is set as $10^3$. For the subsequent tests, the parameters of Algorithm \ref{qregPNT} are chosen as 
 \[
  \varrho={0.9}/{(1.1\!+\!L_{0})},\ \tau=10,\ \delta=10^{-5},\ L_{m}=10^{-12}\ \ {\rm and}\  \ L_{\!M}=10^{8},
\]
 where $L_{0}$ is an estimate for the constant $L_g$ appearing in \eqref{grad-Lip}, i.e., a positive constant such that $f(x^{0,*})\le f(x^{0})+\langle\nabla\!f(x^0),x^{0,*}\!-\!x^0\rangle\!+\!\frac{L_{0}}{2}\|x^{0,*}\!-\!x^0\|^2$ holds with $x^{0,*}\!=\!{\rm prox}_{L_{0}^{-1}g}(x^0\!-\!L_0^{-1}\nabla\!f(x^0))$. Notice that the constant $L_{k,0}$ in step 1 of Algorithm \ref{qregPNT} is an initial upper estimation of the H\"older modulus of exponent $q-2$ for $\nabla^2\!f$ at $x^k$. Motivated by the Barzilai-Borwein rule \cite{Barzilai88} for estimating the Lipschitz constant of $\nabla\!f$ (see \cite{Wright09}), we always choose $L_{0,0}=L_0$, and $L_{k,0}$ for $k\ge 1$ by the following formula
 \begin{equation*}
 \max\Big\{\min\Big\{\frac{\|[\nabla^2\!f(x^{k})-\nabla^2\!f(x^{k-1})](x^k\!-\!x^{k-1})\|}{\|x^{k}-x^{k-1}\|^{q-1}},L_{\!M}\Big\},L_{m}\Big\}.
 \end{equation*}
\subsection{Validation of local convergence rate}\label{sec5.2}

 We validate the local superlinear convergence rate and global complexity bound results of Algorithm \ref{qregPNT}. Figure \ref{fig_rate} (a) plots the KKT residual curves yielded by Algorithm \ref{qregPNT} with different $q\in(2,3]$ for solving the $\ell_1$-norm regularized logistic regression with dataset ``leu''. For this class of convex composite problems, Section \ref{sec5.3.1} contains its detailed description, and the local Lipschitzian error bound was shown to hold at any $\overline{x}\in\mathcal{S}^*=\mathcal{X}^*$ in \cite{ZhouSo17}. We see that the KKT residual sequence returned by Algorithm \ref{qregPNT} with a larger $q\in(2,3]$ indeed has a better $Q$-superlinear rate and global complexity bound, which coincides with the theoretical results of Theorems \ref{Theorem-Lconverge} and \ref{complexity}, but the times listed in the legend indicates that Algorithm \ref{qregPNT} with a larger $q\in(2,3]$ may not require less one. Figure \ref{fig_rate} (b) plots the residual curves returned by Algorithm \ref{qregPNT} with different $q\in(2,3]$ for solving a test example of portfolio decision with higher moments from Section \ref{sec5.3.3}. For this class of nonconvex composite problems, now it is unclear whether the local H\"olderian error bound in Theorems \ref{Theorem-Lconverge} and \ref{complexity} or even the one in \eqref{ebound-MS} hold. 
 The curves of Figure \ref{fig_rate} (b) show that the sequence produced by Algorithm \ref{qregPNT} with $q\in(2,3]$ has no superlinear rate, but Algorithm \ref{qregPNT} with a larger $q$ still has a better global complexity bound and requires less running time by those in the legend. 
\begin{figure}[h]
 \centering
 \subfigure[\label{figPR1a}]{\includegraphics[scale=0.4]{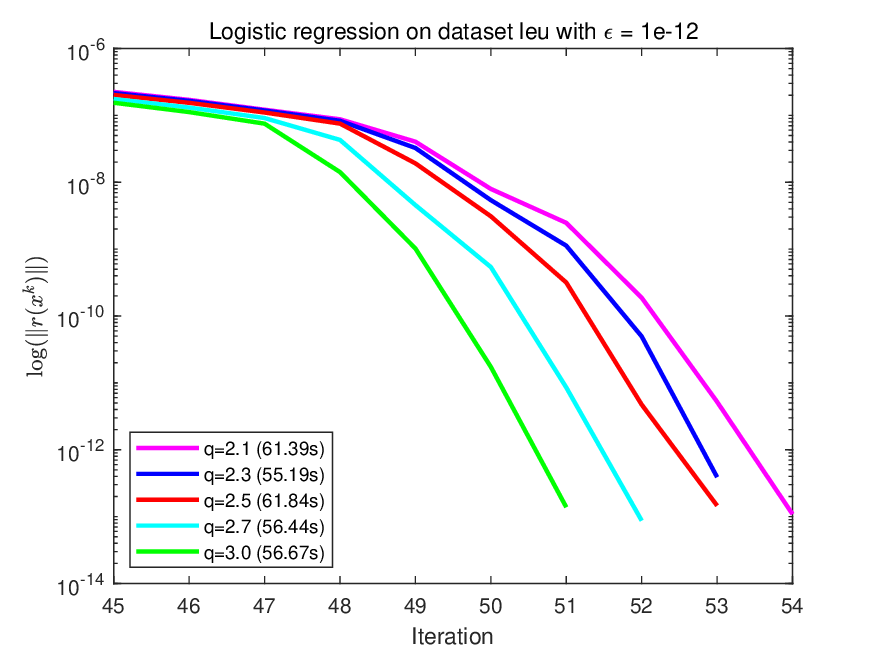}}
 \subfigure[\label{figPR1b}]{\includegraphics[scale=0.4]{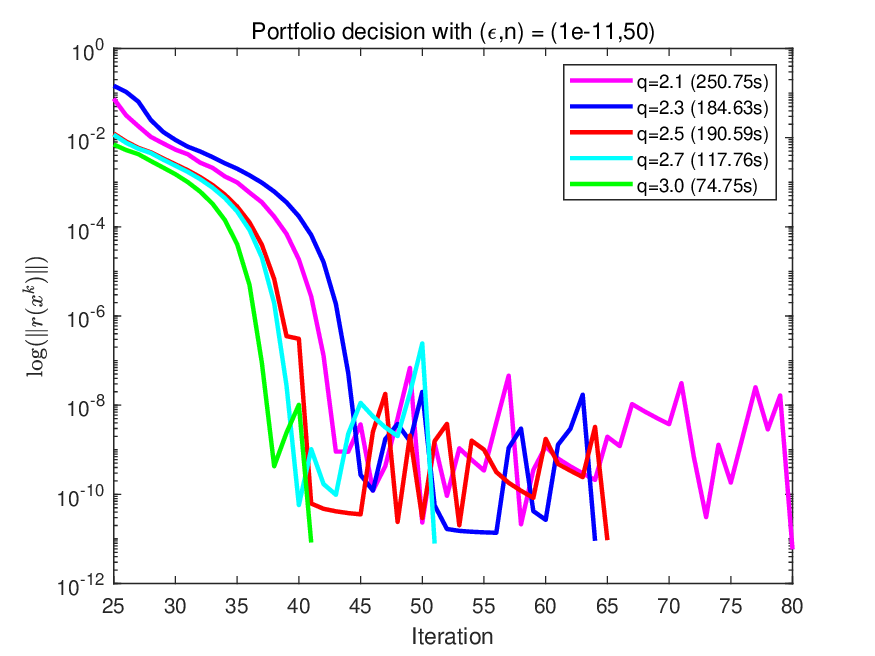}}
 \setlength{\abovecaptionskip}{2pt}
 \setlength{\belowcaptionskip}{2pt}
 \caption{Convergence behavior of the sequences produced by Algorithm \ref{qregPNT} with different $q$}
 \label{fig_rate}
\end{figure}
\subsection{Numerical comparisons}\label{sec5.3}

Based on the convergence curves in Figure \ref{fig_rate}, we test the performance of Algorithm \ref{qregPNT} with $q=3$ (i.e. the CR method), and compare its performance with that of ZeroFPR and IRPNM on three classes of problems of form \eqref{prob} to validate its efficiency. As mentioned above, ZeroFPR is a first-order line-search algorithm that is applicable to \eqref{prob}, and its code can be downloaded from \url{http://github.com/kul-forbes/ForBES}. IRPNM is an inexact proximal Newton method proposed in \cite{LiuPanWY22}, whose each iteration involves minimizing the sum of a strongly convex quadratic approximation of $f$ and the nonsmooth convex function $g$ with the solver dALMSN, a dual augmented Lagrangian method armed with the semismooth Newton. The numerical results reported in \cite{LiuPanWY22} and \cite{Dahl24} indicate that IRPNM is superior to GIPN, an inexact proximal Newton-type method proposed in \cite{Kanzow21} by solving subproblem with FISTA (named GIPN-F) or the KKT system with the semismooth Newton method (named GIPN-S), and AC-FISTA proposed in \cite{Liang23}. Here, we do not compare the performance of our CR method with that of GIPN and AC-FISTA separately. For ZeroFPR and IRPNM to solve problem \eqref{prob}, we adopt their default setting. For fairness of numerical comparisons, the three solvers start from the same initial point $x^0\in{\rm dom}\,g$ and stop under the same condition $r(x^k)\le\epsilon$.
\subsubsection{$\ell_1$-norm regularized logistic regression}\label{sec5.3.1}

The $\ell_1$-norm regularized logistic regression takes the form of \eqref{prob} with $\mathbb{X}=\mathbb{R}^n,\,f(\cdot)=\psi(A\,\cdot)$ and $g(\cdot)=\lambda\|\cdot\|_1$, where $\lambda>0$ is a regularization parameter, $\psi(u)=\frac{1}{m}\sum_{i=1}^m\log(1+\exp(-b_iu_i))$ with $b_1,\ldots,b_m\in\{-1,1\}$ being the labels, and $A\in\mathbb{R}^{m\times n}$ is a data matrix. We test this class of convex composite problems with the data $(A,b)$ from LIBSVM; see \url{https://www.csie.ntu.edu.tw/~cjlin/libsvmtools/datasets/}.  The dimension of test instances and the largest eigenvalue of $AA^{\top}$ are listed in Table \ref{table_info}.
\begin{table}[h]
 \centering
 \setlength{\tabcolsep}{2pt}
 \setlength{\belowcaptionskip}{2pt}
 \caption{Test data for $\ell_1$-norm regularized logistic regression}
 \label{table_info}
 \begin{tabular}{cc|cc||cc|cc}
  \hline
  & data & $\lambda_{\max}(AA^{\top})$ & $(m,n)$ & & data & $\lambda_{\max}(AA^{\top})$ & $(m,n)$\\\hline
  I & colon-cancer & 1.93e+4 &	 (62,2000) & IV &  leu & 6.68e+4 &	 (38,7129)  \\ \hline
  II & news20 	& 1.17e+3 &	 (19996,1355191) & V & gisette  &  3.37e+6 & (1000,5000)\\ \hline
  III & duke 	& 1.11e+5 &	 (44,7129) & VI & arcene 	 & 1.45e+10 & (100,10000)\\ \hline
 \end{tabular}
\end{table}

 We report the numerical results of three solvers with the stopping accuracy $\epsilon=10^{-6}$ and the starting point $x^{0}=0$ in Table \ref{table_logisitic}, where the ``resi'' column lists the final KKT residuals, and the ``${\rm xnz}$'' column reports the number of nonzeros of the final iterate, defined by $\min\big\{k\ |\ \sum_{i=1}^k|x^f|_i^{\downarrow}\ge 0.999\|x^f\|_1\big\}$, where $x^f$ is the final iterate, and $|x^f|^{\downarrow}$ is obtained by sorting $|x^f|$ in a descending order. We see that the CR method returns optimal solutions of all test problems within $54$ steps except ``arcene'' with $\lambda=10^{-4}$ for the maximum number of iterations $10^2$, and requires less iterations than IRPNM for the test problems with $\lambda=10^{-6}$ that are more difficult than their counterparts with $\lambda=10^{-4}$. ZeroFPR requires the maximum number of iterations $10^4$ for all test problems except ``colon-cancer'' and ``news20'' with $\lambda=10^{-4}$, and IRPNM also requires the maximum number of iterations $10^3$ for ``arcene''.
 We need to point out that the results of IRPNM for ``leu'' with $\lambda=10^{-4}$ and ``arcene'' are obtained by choosing again the initial penalty parameter of dALMSN. This precisely reflects the attractive convergence rate and more robustness of the CR method. In addition, Table \ref{table_logisitic} also indicates that the solutions returned by the CR method have better objective values and KKT residuals, though it needs more running time.

\begin{table}[h]
 \centering
 \setlength{\tabcolsep}{1.5pt}
 \setlength{\belowcaptionskip}{2pt}
 \caption{Numerical comparisons on $\ell_1$-norm regularized logistic regressions with real data}
	\label{table_logisitic}
 \scalebox{0.8}{
	\begin{tabular}{@{\extracolsep{\fill}}c|c|ccccc|ccccc|ccccc@{\extracolsep{\fill}}}
		\hline
		\multicolumn{1}{c|}{} & \multicolumn{1}{c|}{} & \multicolumn{5}{c|}{CR method} &\multicolumn{5}{c|}{ZeroFPR} & \multicolumn{5}{c}{IRPNM} \\
		\hline
		data & $\lambda$ &  iter & Fval & resi & time & xnz &  iter & Fval & resi & time & xnz&  iter & Fval & resi & time & xnz\\
		\hline
		\multirow{2}{*}{I} & 1e-4
		& 31 & 3.23e-3 & 8.78e-7 & 4.2 	& 30
		& 8984 & 3.23e-3 & 9.98e-7 & 3.8  & 30 
		& 24 & 3.23e-3 & 9.98e-7 & 0.5 & 30\\ 
		\multirow{2}{*}{} & 1e-6
            & 30 & 5.14e-5 & 8.09e-7 & 4.1 & 57
		& -- & 5.31e-5 & 1.06e-5 & 3.9 & 103
		& 36 & 5.32e-5 & 9.95e-7 & 0.7 &   88\\
		\hline
		\multirow{2}{*}{II} & 1e-4 
		& 10 & 4.04e-1 & 3.54e-7 & 118.6 &  277
		& 418 & 4.04e-1 & 3.90e-7 & 41.0 & 277
		& 9 & 4.04e-1 & 9.93e-7 & 95.6 &  277\\
		\multirow{2}{*}{} & 1e-6
		& 20 & 4.00e-2 & 7.12e-7 & 1009.0 & 3168
		& -- & 4.00e-2 & 5.86e-6 & 997.8 & 2898
		& 342 & 4.01e-2 & 1.00e-6 & 2617.7 & 3276\\ 
		\hline
        \multirow{2}{*}{III} & 1e-4
            & 33 & 1.98e-3 & 8.08e-7 & 18.0 &  29
		& -- & 2.01e-3 & 3.57e-4 & 8.3 & 78
		& 16 & 1.98e-3 & 9.97e-7 & 0.7 &  29\\ 
		\multirow{2}{*}{} & 1e-6
            & 30 & 3.08e-5 & 9.40e-7 & 14.4 & 81
		& -- & 3.42e-5 & 1.64e-5 & 8.0 & 346
		& 47 & 3.19e-5 & 1.00e-6 & 1.5 & 139\\		
		\hline
		\multirow{2}{*}{IV} & 1e-4
            & 44 & 1.71e-3 & 9.57e-7 & 15.1 & 26
		& -- & 1.73e-3 & 2.14e-5 & 8.0 & 32
            & 30 & 1.71e-3 & 9.79e-7 & 1.8 & 26
		\\ 
		\multirow{2}{*}{} & 1e-6
		& 31  & 2.69e-5 & 8.42e-7 & 12.2 & 74
		& -- & 3.01e-5 & 1.29e-5 & 7.6 & 334
		& 40 & 2.83e-5 & 1.00e-6 & 1.5 &  150\\		
		\hline
		\multirow{2}{*}{V} & 1e-4 
            & 34 & 8.71e-3 & 9.83e-7 & 525.8 & 246
		& -- & 8.73e-3 & 1.85e-4 & 113.6 & 298
		& 16  & 8.71e-3 & 9.97e-7 & 21.7 & 246\\
		\multirow{2}{*}{} & 1e-6
		& 34 & 1.51e-4 & 9.00e-7 & 363.9 &  325
		& -- & 1.56e-4 & 1.01e-5 & 112.8 & 499
		& 34 & 1.53e-4 & 1.00e-6 & 36.7 & 399\\		
		\hline
		\multirow{2}{*}{VI} & 1e-4
		& -- & 8.70e-5 & 5.40e-5 & 95.3 & 140
		& -- & 9.66e-5 & 3.15e-4 & 7.8 & 577
		& -- & 9.62e-5 & 2.62e-4 & 36.7 	& 527\\
		\multirow{2}{*}{} & 1e-6
            & 54  & 1.29e-6 & 9.28e-7 & 26.5 & 229
		& -- & 1.34e-6 & 4.23e-6 & 7.8 & 620
		& -- & 2.62e-6 & 5.59e-5 & 37.3 & 7225\\
		\hline
\end{tabular}}
{\it\small where ``$-$'' means that the maximum number of iterations is attained}
\end{table}

\subsubsection{$\ell_1$-norm regularized Student's $t$-regressions}\label{sec5.3.2}
This class of problem takes the form of \eqref{prob} with $\mathbb{X}=\mathbb{R}^n,\, f(\cdot)\!=\psi(A\,\cdot\!-\!b)$ and $g(\cdot)\!=\lambda\|\cdot\|_1$, where $\psi(u)\!:=\!\sum_{i=1}^m\log(1+{u_i^2}/{\nu})\ (\nu>0)$ for $u\in\mathbb{R}^m$.
Such a nonconvex loss function $f$ was introduced in \cite{Aravkin12} to deal with the data contaminated by heavy-tailed Student-$t$ noise. The test examples are randomly generated in the same way as in \cite{Milzarek14}. Specifically, we generate a true sparse signal $x^{\rm true}$ of length $n=512^2$ with $s=\lfloor\frac{n}{40}\rfloor$ nonzero entries whose indices are chosen randomly, and then compute each nonzero component by the formula $x^{\rm true}_i=\eta_1(i)10^{d\eta_2(i)/20}$, where $\eta_1(i)\in\{1,-1\}$ is a random sign and $\eta_2(i)$ is uniformly distributed in $[0,1]$. The signal has a dynamic range of $d$ dB with $d\in\{20,40,60,80\}$. The matrix $A\in\mathbb{R}^{m\times n}$ takes $m={n}/{8}$ random cosine measurements, i.e., $Ax = (\verb"dct"(x))_J$, where $\verb"dct"$ is the discrete cosine transform and $J\subseteq\{1,2,\ldots,n\}$ with $|J|=m$ is an index set chosen randomly. The vector $b$ is obtained by adding Student's $t$-noise with degree of freedom $4$ and rescaled by $0.1$ to $Ax^{\rm true}$. We choose $\nu=0.25$ and $\lambda=c_{\lambda}\|\nabla\!f(0)\|_{\infty}$ with $c_{\lambda}>0$ for numerical tests.

 We run three solvers from the starting point $x^{0}\!=A^{\top}b$ with the stopping accuracy $\epsilon=10^{-5}$ for $10$ independent trials, i.e., ten groups of data $(A,b)$ generated randomly. Table \ref{table_dct} reports the average objective values, KKT residuals, and running time over $10$ independent trials. We see that the three solvers yield the same objective values for all test examples, and our CR method and IRPNM return comparable KKT residuals for all test examples, which are superior to those returned by ZeroFPR. Compared with IRPNM, our CR method requires fewer iterations and needs less running time for $d=80$ but a little more time for other $d$. Notice that the test examples with $d=80$ are more difficult than those with $d=20,40$ and $60$. Moreover, after checking, we find that our CR method returns second-order stationary points for all test examples except for the case where $d=20, c_{\lambda}=0.1$. Among others, the $\lambda_{\rm min}$ column in Table \ref{table_dct} reports the smallest one among the minimum eigenvalues of $[\nabla^2\!f(x^{*})]$ for $10$ trials. 
 This agrees with the fact that the CR method has a better local superlinear convergence rate and global complexity bound in theory.
\begin{table}[h]
 \centering
 \setlength{\tabcolsep}{2pt}
 \setlength{\belowcaptionskip}{2pt}
 \caption{Numerical comparisons on regularized Student's $t$-regression with $n=512^2$ and $m=n/8$}
 \label{table_dct}
 \scalebox{0.8}{
 \begin{tabular}{c|c|ccccc|cccc|cccc}
 \hline
 \multicolumn{2}{c|}{} &\multicolumn{5}{c|}{CR method} & \multicolumn{4}{c|}{ZeroFPR} & \multicolumn{4}{c}{IRPNM}  \\
 \hline
 $d$ & $c_{\lambda}$ & Iter & Fval & resi & time & $\lambda_{\rm min}$ & Iter & Fval & resi & time & Iter & Fval & resi & time\\
 \hline
 \multirow{2}{*}{20} & {0.1}  
  & 10.4 & 9.53e+3 & 8.24e-6 & 10.2 & -1
  & 147.8 & 9.53e+3 & 9.15e-6 & 9.3 
  & 24.6  & 9.53e+3 & 8.80e-6 & 9.3
  \\ 
  \multirow{2}{*}{} & {0.01}
  & 5.7 & 1.02e+3 & 8.71e-6 & 55.2 & 0
  & 1014.0 & 1.02e+3 & 9.56e-6 & 62.8
  & 10.8 & 1.02e+3 & 8.30e-6 & 25.0  
  \\ \hline 		
  \multirow{2}{*}{40} & {0.1}
  & 10.6  & 2.38e+4 & 8.82e-6 & 37.0 &  0 
  & 440.8 & 2.38e+4 & 9.26e-6 & 27.6 
  & 17.5 & 2.38e+4 & 8.40e-6 & 24.6
  \\ 
	\multirow{2}{*}{} & {0.01}
        & 6.3 & 2.40e+3 & 8.07e-6 & 133.8 & 0
        & 3255.9 & 2.40e+3 & 9.63e-6 & 202.6 
        & 14.2 & 2.40e+3 & 7.24e-6 & 81.6
	\\\hline
	\multirow{2}{*}{60} & {0.1}
        & 7.0 & 5.42e+4 & 8.89e-6 & 87.5  & 0
        & 1544.5 & 5.42e+4 & 9.48e-6 & 95.9 
        & 23.2 & 5.42e+4 & 8.09e-6 & 55.1  
	\\
	\multirow{2}{*}{} & {0.01}
        & 9.2 & 5.42e+3 & 8.77e-6 & 255.1 & 0
        & 6272.6 & 5.42e+3 & 9.62e-6 & 390.0 
        & 17.6 & 5.42e+3 & 8.21e-6 & 178.2
        \\\hline
	\multirow{2}{*}{80} & {0.1}
	& 8.0  & 1.35e+5 & 8.51e-6 & 196.6 & 0
        & 5277.2 & 1.35e+5 & 9.51e-6 & 329.9
        & 109.6 & 1.35e+5 & 8.35e-6 & 233.6
        \\
        \multirow{2}{*}{} & {0.01}
        & 18.2 & 1.35e+4 & 8.77e-6 & 635.2 & 0
        & 10000 & 1.35e+4 & 2.99e-4 & 624.2
        & 116.4 & 1.35e+4 &  6.92e-6 & 794.5
        \\ \hline
\end{tabular}}
\end{table}
\vspace{-10pt}

\subsubsection{Portfolio decision with higher moments}\label{sec5.3.3}
In this example, we focus on the higher moment portfolio selection problem described in \cite{Dihn11,Zhou21}. Let $\xi$ be a random vector to denote the return rate of $n$ assets. It is known that the mean and variance of $\xi$ have been widely used to assess the  expected profit and risk of the portfolio, and the associated standard mean-variance portfolio model is tailored to Gaussian distributed return. When the return distribution exhibits heavy tail and asymmetry, it is reasonable to extend the classical mean-variance optimization to higher moments \cite{Jean71} and consider the MVSK model of form \eqref{prob} with $\mathbb{X}=\mathbb{R}^n$ and
\begin{equation*}
 f(x)=-\omega_1E(x)+\omega_2V(x)-\omega_3S(x)+\omega_4K(x)\ \ {\rm and}\ \ g(x)=\chi_{\Delta}(x),
\end{equation*}
where $\omega_i\in[0,1]$ with $\sum_{i=1}^4\omega_i=1$ are the investor's preference levels corresponding to the four moments, and $\Delta=\{x\in\mathbb{R}_{+}^n\ |\ \sum_{i=1}^nx_i=1\}$ denotes the feasible set of portfolio weights. Here, $E,V,S,K$ represent four moments of the portfolio, and for their definitions and gradient and Hessian computation formulas, please refer to \cite{Zhou21}. 

We choose the weekly returns $R\in\mathbb{R}^{N\times T}$ of NASDAQ100 index during the period 26/08/2022-25/08/2023 as the test dataset, where $N$ is the total number of stocks and $T$ is the number of weeks during the selected period. The test data is obtained by the following two steps: {\bf(1)} randomly select $n\le N$ stocks from the dataset and compute the return rate of each selected stock via $\xi_{it}=100[\log(R_{i,t+1})-\log(R_{i,t})]$ for $t=1,2,\ldots,T\!-\!1$; {\bf(2)} compute four sample moments of the selected $n$ stocks as in \cite{Dihn11} by using the return rate and the weight vector $(\omega_1,\omega_2,\omega_3,\omega_4)=(0.29,0.21,0.4,0.1)$. Table \ref{table_portfolio} reports the numerical results of three solvers with the stopping accuracy $\epsilon=10^{-5}$ and the starting point $x^0=\frac{1}{n}(1,1,\ldots,1)^{\top}$. We see that the CR method is significantly superior to ZeroFPR in terms of running time, and it returns the comparable objective values and a little better KKT residuals for all test examples. For this class of problems, the computation of function and gradient values of $f$ is expensive. The time difference between CR method and ZeroFPR is attributed to the fact that Algorithm \ref{qregPNT} needs fewer calls of function and gradient values of $f$. Our CR method has the comparable performance with IRPNM except that the former needs a little fewer iterations and returns a little better objective values. For this class of test problems, IRPNM with the default setting exhibits poor numerical performance, failing to return a solution with the desired accuracy within the maximum number of iterations. The results of IRPNM in Table \ref{table_portfolio} are obtained by modifying IRPNM with a tighter approximation as suggested in \cite[Section 4.2]{LiuPanWY22} to construct the subproblem as well as choosing the initial penalty parameter of dALMSN again.
\begin{table}[H]
 \centering
 \setlength{\tabcolsep}{2pt}
 \setlength{\belowcaptionskip}{2pt}
 \caption{Numerical comparisons on portfolio decision with higher moments}
 \label{table_portfolio}
 \scalebox{0.8}{
 \begin{tabular}{c|ccccc|ccccc|ccccc}
 \hline
 \multicolumn{1}{c|}{}  &\multicolumn{5}{c|}{CR method} & \multicolumn{5}{c|}{ZeroFPR} & \multicolumn{5}{c}{IRPNM}\\
 \hline
  $n$  & iter & Fval &  resi & time & xnz & iter & Fval &  resi & time & xnz & iter & Fval &  resi & time & xnz\\
  \hline
  50
  & 33.6 & -4.49e-2 & 4.69e-6 & 5.4 & 18.1
  & 3396.8 & -4.41e-2 & 9.11e-6 & 12.5 & 17.8
  & 33.4 & -4.36e-2 & 9.11e-6 & 0.8 &	 18.3\\ 
  \hline
  80
  & 37.7 & -9.13e-2 & 2.78e-6 & 9.6 & 22.3
  & 4748.6 & -9.10e-2 & 1.59e-3 & 128.9 & 22.1
  & 43.5 & -8.91e-2 & 9.56e-6 & 4.4 & 22.8\\
  \hline
 100
 & 35.8 & -1.12e-1 & 3.47e-6 & 15.3 & 26.0
 & 5450.3 & -1.14e-1 & 1.29e-5 & 348.9 & 25.3
 & 40.7 & -1.10e-1 & 9.63e-6 &	  9.9 &	 25.7\\ 
 \hline
 120
 & 40.8 & -1.35e-1 & 3.13e-6 & 28.9 & 26.9
 & 5992.4 & -1.35e-1 & 4.48e-4 & 795.5 & 26.2
 & 44.1 & -1.32e-1 & 9.59e-6 & 21.3 &	 27.1 \\ 
 \hline
 150
 & 32.3 & -1.75e-1 & 3.95e-6 & 49.0 & 29.6
 & 4819.1  & -1.77e-1 & 9.41e-6 & 1502.9 & 29.9
 & 37.8  & -1.75e-1 & 9.26e-6 & 47.9 &	 29.3 \\ 
 \hline
\end{tabular}}
\end{table}
\section{Conclusions}\label{sec6} 

We proposed a practical inexact $q$-order regularized proximal Newton method for $q\in[2,3]$ to solve nonconvex composite problem \eqref{prob}, which becomes an inexact CR method for $q=3$. We proved the full convergence of the generated iterate sequence under the KL property of $F$, and the local superlinear rate of order $\frac{q-1}{\theta q}$ under the KL property of $F$ with exponent $\theta\in(0,\frac{q-1}{q})$. Furthermore, the iterate and objective value sequences were proved to have a local superlinear rate of order $\gamma(q\!-\!1)$ under a local H\"{o}lderian error bound of order $\gamma$ on the second-order stationary point set $\mathcal{X}^*$. These results improve those of exact or inexact CR methods obtained in \cite{Doikov20,Doikov22} for convex composite optimization,  and they are first extended to nonconvex composite problems. Numerical tests have shown that Algorithm \ref{qregPNT} armed with ZeroFPR as the inner solver is very promising for those problems with highly nonlinear $f$. An interesting topic is to develop a robust and fast inner solver, i.e., algorithm to solve the composite optimization problem of minimizing the sum of a $q$-order regularized nonconvex quadratic function and a nonsmooth convex function.

\section*{Acknowledgments}
The authors would like to thank the anonymous referees and the associated editor for their valuable suggestions and comments.

\section*{Data availability}
The data used in Sections \ref{sec5.3.1} and \ref{sec5.3.3} are available from LIBSVM (\url{https://www.csie.ntu.edu.tw/~cjlin/libsvmtools/datasets/}) and  yahoo finance (\url{https://finance.yahoo.com/}), respectively.

\section*{Declarations}
 The authors declare that they have no conflicts of interest.


\bibliographystyle{siamplain}
\bibliography{references}

\end{document}